\theoremstyle{plain}
\newtheorem{theorem}{Theorem}
\newtheorem*{theorem*}{Theorem}
\newtheorem{lemma}[theorem]{Lemma}
\newtheorem{definition}[theorem]{Definition}
\newtheorem{corollary}[theorem]{Corollary}
\newtheorem{proposition}[theorem]{Proposition}
\newtheorem{conjecture}[theorem]{Conjecture}
\newtheorem{question}[theorem]{Question}
\newtheorem{remark}[theorem]{Remark}
\newtheorem{problem}[theorem]{Problem}
\newtheoremstyle{derp}
{3pt}
{3pt}
{}
{}
{\upshape}
{:}
{.5em}
{}
\theoremstyle{derp}
\newtheorem{example}{Example}
\newcommand{\Z}{\mathbb{Z}}
\newcommand{\N}{\mathbb{N}}
\newcommand\xqed[1]{%
  \leavevmode\unskip\penalty9999 \hbox{}\nobreak\hfill
  \quad\hbox{#1}}
\newcommand\qee{\xqed{$\fullmoon$}}
\newcommand{\supp}{\mathrm{supp}}
\newcommand{\sph}{\mathbb{S}}
\title{Recoding Lie algebraic subshifts}
\author{
Ville Salo \and Ilkka T\"orm\"a
}
\begin{document}
\maketitle

\begin{abstract}
We study internal Lie algebras in the category of subshifts on a fixed group -- or Lie algebraic subshifts for short. We show that if the acting group is virtually polycyclic and the underlying vector space has dense homoclinic points, such subshifts can be recoded to have a cellwise Lie bracket. On the other hand there exist Lie algebraic subshifts (on any finitely-generated non-torsion group) with cellwise vector space operations whose bracket cannot be recoded to be cellwise. We also show that one-dimensional full vector shifts with cellwise vector space operations can support infinitely many compatible Lie brackets even up to automorphisms of the underlying vector shift, and we state the classification problem of such brackets.

From attempts to generalize these results to other acting groups, the following questions arise: Does every f.g. group admit a linear cellular automaton of infinite order? Which groups admit abelian group shifts whose homoclinic group is not generated by finitely many orbits? For the first question, we show that the Grigorchuk group admits such a CA, and for the second we show that the lamplighter group admits such group shifts.
\end{abstract}

\section{Introduction}

If a subshift has a group structure defined by shift-commuting continuous operations, then we can recode it so that the group operations become cellwise \cite{Ki87}. In other words, up to topological conjugacy every group subshift is a subgroup $X$ of a full shift $A^\Z$ where $A$ itself is a group and the group operation is given by $(x \cdot y)_i = x_i \cdot y_i$ for all $x, y \in A^\Z$ and $i \in \Z$. We gave another proof of this recodability in \cite{SaTo12d} based on a property of the variety (see \cite{BuSa81}) of groups: this variety is \emph{shallow}. This means that if the variety is defined by function symbols $f_i$ of arity $n_i$ for $i \in I$, then every composition of the functions $\xi \mapsto f_i(x_1, \ldots, x_{k-1}, \xi, x_{k+1}, \ldots, x_{n_i})$ with $1 \leq k \leq n_i$ and $x_j \in X$ is equivalent to such a composition of bounded length. For groups these functions are exactly multiplication by fixed group elements from the left and right, as well as inversion, and their compositions have the form $\xi \mapsto x \cdot \xi^{\pm 1} \cdot y$ for $x, y \in X$.

Shift-commuting continuous operations are the morphisms in the category of subshifts \cite{Ma71,SaTo15}, and if a subshift $X$ has such operations that put it in a variety $\mathcal{V}$, we say $X$ is an \emph{internal} algebra in this variety. We study the aforementioned property of \emph{cellwiseability}, meaning the existence of a recoding of $X$ into a subshift that sends the algebra operations into ones that are defined cellwise. Cellwising a subshift makes it much easier to deal with, as we can study algebras with cellwise operations through the finite quotient algebras obtained by restricting to words of finite length, without words getting shorter as operations are applied. The actual algebra is then the inverse limit of the algebras on finite words.

We can more generally do this in the category of $G$-subshifts for a finitely-generated group $G$. For many varieties $\mathcal{V}$ that one encounters in everyday life, the following are equivalent:
\begin{itemize}
\item for all f.g. groups $G$, all internal $\mathcal{V}$-subshifts over $G$ can be cellwised,
\item all internal $\mathcal{V}$-subshifts over $\Z$ can be cellwised,
\item the variety $\mathcal{V}$ is $k$-shallow for some $k \in \N$.
\end{itemize}
This equivalence holds for groups, monoids, semigroups, rings, vector spaces over a finite field, heaps, Boolean algebras and distributive lattices (which are all shallow), and also for quasigroups, loops\footnote{This can be obtained from Example 4 in \cite{SaTo12d} by taking the disjoint union with a singleton subshift that acts as an identity element.} and lattices (which are not shallow). An open case mentioned in \cite{SaTo12d} are the modular lattices: we do not know whether modular lattice subshifts can be recoded to be cellwise.
It is known that this variety is not $k$-shallow for any $k$, and in \cite{SaTo12d} we only give an uncellwiseable non-modular lattice example.

In this paper, we study the internal Lie algebras, i.e.\ subshifts that are in the internal variety of Lie algebras over a fixed field $K$, which we call \emph{Lie algebraic subshifts} (over $K$). The variety of Lie algebras is not shallow -- either by the characterization of free Lie algebras, or by Corollary~\ref{cor:NotCellwise} below -- and the equivalence above holds for it, i.e.\ there exist Lie algebraic $\Z$-subshifts whose Lie bracket cannot be recoded to be cellwise. The underlying vector spaces of these examples do not have dense homoclinic points, and we can produce them on any group $G$ that admits a linear cellular automaton of infinite order.

\begin{theorem*}
\label{thm:NonCellwiseableIntro}
Let $K$ be a field, $d \geq 1$ and $G$ a group such that $(K^d)^G$ admits a linear cellular automaton of infinite order. Then $G$ admits a non-cellwiseable Lie algebraic subshift.
\end{theorem*}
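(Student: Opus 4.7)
The plan is to build a semidirect-product Lie algebra on top of $(K^d)^G$. Let $\varphi\colon (K^d)^G\to (K^d)^G$ be the given linear CA of infinite order. Define
$$X := \{(v,c^G):v\in (K^d)^G,\ c\in K\}\subseteq (K^d\oplus K)^G,$$
the subshift of configurations whose second coordinate is constant, and equip it with the bracket
$$[(v,c),(w,e)]:=(c\,\varphi(w)-e\,\varphi(v),\ 0).$$
This realizes the semidirect product $(K^d)^G\rtimes_\varphi K$ in which the one-dimensional Lie algebra $K$ acts on $(K^d)^G$ through $\varphi$. Bilinearity, antisymmetry, shift-equivariance and continuity are immediate. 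For Jacobi, note that $[x,y]$ always has zero second coordinate, so the triple bracket $[[x,y],z]$ reduces to a single application of $\varphi$ to the first coordinate scaled by the $K$-component of $z$; cyclic summation cancels the resulting $c_ic_j\varphi^2(v_k)$ terms by commutativity of scalar multiplication in $K$. Thus $X$ is an internal Lie algebra in the category of subshifts.

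To rule out cellwiseability, I argue by contradiction. Assume a conjugacy $h\colon X\to X'$ recodes all algebraic operations to cellwise ones. Cellwise bracket on $X'$ combined with bilinearity forces $\supp(\mathrm{ad}'(a')(\xi'))\subseteq \supp(\xi')$ for every $a',\xi'\in X'$, and hence the same containment for every finite composition of $\mathrm{ad}'$'s. The key computation then extracts iterates of $\varphi$: for $y:=(0,1^G)$ and $x:=(v,0)$ with $v$ finitely supported, induction on $n$ gives $\mathrm{ad}(y)^n(x)=(\varphi^n(v),0)$ in $X$. Pushing this identity across $h$, the $X'$-support containment pulls back to the uniform bound
$$\supp(\varphi^n(v))\subseteq \supp(v)\cdot F$$
for every $n\geq 0$ and every finitely-supported $v\in (K^d)^G$, where $F\subseteq G$ is a fixed finite set that absorbs the block-code radii of $h$ and $h^{-1}$.

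The main obstacle is extracting a contradiction between this uniform spreading bound and the infinite order of $\varphi$. By shift-equivariance, $\varphi$ is determined by its restriction to the copies of $K^d$ sitting at each group element, and the bound places each iterate $\varphi^n$ inside a common finite-dimensional $K$-space of linear maps $K^d\to\{w:\supp(w)\subseteq F\}$; hence $K[\varphi]$ acts through a finite-dimensional $K$-algebra on finitely-supported configurations, so $\varphi$ is algebraic over $K$. The subtlety is converting ``algebraic'' into ``some positive power equals the identity,'' because over an infinite field $\varphi$ could \emph{a priori} be scalar multiplication by a non-torsion algebraic number. I would close the gap by invoking the preceding Corollary~\ref{cor:NotCellwise}, which should package exactly this support-spreading obstruction with enough strength to rule out such pathological $\varphi$; failing that, one refines the construction by enlarging the alphabet (for instance to $K^d\oplus K^d$) so that the iterated brackets witness genuinely spreading behaviour of some power of $\varphi$ and the contradiction becomes immediate from the pigeonhole on radius-$R$ linear CAs.
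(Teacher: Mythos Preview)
Your construction is exactly the paper's (Theorem~\ref{thm:NotCellwise}): the same subshift $X=(K^d)^G\times\{a^G:a\in K\}$ with, up to an overall sign, the same bracket $[(x,a),(y,b)]=(bf(x)-af(y),0)$. Your non-cellwiseability argument is also the paper's, just unpacked: the paper invokes Lemma~\ref{lem:CellwiseIffInfoCanMove} and observes that the affine maps $\phi_n(\xi)=[\phi_{n-1}(\xi),(0,1)]$ satisfy $\phi_n(x,0)=(\varphi^n(x),0)$, so bounded radius for all $\phi_n$ would force all $\varphi^n$ to have a common neighborhood $F$. You reach precisely this point correctly by pulling the cellwise $\mathrm{ad}'$-maps back through $h$ and $h^{-1}$.

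The gap is only in your last paragraph. Once you know that every $\varphi^n$ is a linear CA with neighborhood contained in a fixed finite $F$, you are done: the paper works throughout over a \emph{finite} field $K$ (this is stated at the start of the Lie algebra subsection, and the body version of the theorem says ``finite field''), so the set of $K$-linear CAs with neighborhood $F$ is finite, hence $\varphi^n=\varphi^m$ for some $n<m$, contradicting infinite order. There is no need to pass through ``$\varphi$ is algebraic over $K$'', no infinite-field pathology to worry about, and certainly no appeal to Corollary~\ref{cor:NotCellwise}, which is a \emph{consequence} of the theorem you are proving and would be circular. Drop the final paragraph and replace it with the one-line pigeonhole over the finite set of linear CAs with neighborhood $F$; then your proof coincides with the paper's.
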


We do not know whether all groups admit such cellular automata, but we provide an example for the Grigorchuk group.

As our main positive result we show that all Lie algebraic subshifts where homoclinic points are dense and well-behaved can be recoded to have a cellwise bracket.

\begin{theorem*}
Let $G$ be a finitely-generated group and $X \subset A^G$ an internal Lie algebra over a finite field $K$. If the set of homoclinic points of $X$ is generated, as a vector space, by finitely many $G$-orbits, and is dense in $X$, then $X$ is cellwiseable.
\end{theorem*}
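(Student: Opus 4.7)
The plan is to first reduce to the case where the vector-space operations are cellwise, and then use the homoclinic hypothesis to localize the Lie bracket into a higher-block encoding. Since the variety of $K$-vector spaces is shallow, the shallow-variety cellwisability result of \cite{SaTo12d} lets us assume, after a topological conjugacy, that $X \subset A^G$ with $A$ a finite-dimensional $K$-vector space and cellwise $+$ and scalar multiplication; only the Lie bracket $[\cdot,\cdot]$ needs to be cellwised. A bilinear Curtis--Hedlund argument then produces a finite symmetric neighborhood $N \ni e$ in $G$ and a bilinear block code $\beta : \pi_N(X)^2 \to A$ with $[x,y](g) = \beta((g^{-1}x)|_N, (g^{-1}y)|_N)$.

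Next I would exploit the homoclinic structure. Pick finite-support $h_1, \ldots, h_m \in X$ whose $G$-orbits span the homoclinic subspace $H$ as a $K$-vector space, so that $H$ is a finitely generated $K[G]$-module with generators the $h_i$. Because $\beta$ is bilinear and $(g^{-1}x)|_N$ vanishes whenever $gN \cap \supp(x) = \emptyset$, a short support-chase shows that each bracket $[h_i, g \cdot h_j]$ has uniformly bounded support and vanishes for $g$ outside some finite set $F_{ij} \subset G$. Consequently the bracket on the dense subalgebra $H$ is determined by a finite collection of \emph{structure data}: the finitely many finite-support elements $[h_i, g h_j]$ with $g \in F_{ij}$, each expanded (not necessarily uniquely) as a finite $K[G]$-combination of the $h_k$.

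For the recoding, choose a finite neighborhood $M \subset G$ of $e$ containing $N$, all supports $\supp(h_i)$, and the supports of all the structure-data elements, each thickened by $N$. The $M$-block conjugacy yields a new subshift $X_M \subset (A')^G$ with alphabet $A' := \pi_M(X)$, still with cellwise vector-space operations. To put a $K$-Lie bracket $[\cdot,\cdot]_{A'}$ on $A'$ making the induced bracket on $X_M$ cellwise, one defines $[u,v]_{A'}$ by lifting $u,v \in A'$ to homoclinic representatives in $H$, computing their bracket via the structure data, and taking the $M$-window of the result. The main obstacle is the well-definedness of this assignment: a priori $\pi_M([x,y])$ depends on $x,y$ outside $M$ through the radius of $\beta$, so two lifts with identical $M$-windows could yield different brackets on $M$. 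Overcoming this requires leveraging the finite $K[G]$-generation of $H$ to show that every ``long-range'' contribution to $\pi_M([x,y])$ is already captured by a translate of the precomputed structure data once $M$ is chosen large enough; density of $H$ in $X$ then extends the cellwise bracket from the homoclinic points to all of $X_M$ by continuity, and the Lie-algebra axioms are inherited from those of $H$. I expect this control of long-range interactions via the $K[G]$-module structure of $H$ to be the principal technical hurdle.
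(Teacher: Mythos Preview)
Your reduction to cellwise vector-space operations is the same as the paper's first step, and your observation that the bracket on $\Delta_X$ is determined by finitely many ``structure constants'' $[h_i, g h_j]$ is correct. But the proposal has a genuine gap at exactly the point you flag as the main obstacle, and the fix you suggest is not the right one.

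The well-definedness problem is not merely that ``long-range contributions'' need to be accounted for by structure data; it is that for any finite $M$ and any bracket of positive radius, one can choose a homoclinic $x$ with $x|_M = 0$ but $\supp(x)$ just outside $M$, and then $[x,y]|_M$ will be nonzero for suitable $y$. So $\pi_M([\tilde u,\tilde v])$ genuinely depends on the lift, and no enlargement of $M$ cures this on its own. The finite $K[G]$-generation of $\Delta_X$ and the finiteness of the structure data are purely bilinear facts; they hold equally well for any shift-commuting bilinear form, including ones for which the theorem is false. What is missing is the Jacobi identity, which you invoke only at the end as something ``inherited'', not as an ingredient in the construction.

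The paper takes a different route. Rather than building the recoding by hand, it invokes the criterion of Lemma~\ref{lem:CellwiseIffInfoCanMove}: $X$ is cellwiseable if and only if its affine maps have uniformly bounded radius as nonuniform CA. After putting affine maps in the normal form $\xi \mapsto a[\xi, y_1, \ldots, y_{k-1}] + y_k$, the task becomes: show that if $x|_{B_r} = 0$ with $r$ large, then every element of the ideal generated by $x$ vanishes at $1_G$. Here the Jacobi identity enters essentially, via Lemma~\ref{lem:NeedOnlyNonCommu}: the ideal $[\{x\}, \Delta_X]$ is spanned by brackets $[x, g_1 x_{i_1}, \ldots, g_k x_{i_k}]$ in which \emph{every} $g_j x_{i_j}$ fails to commute with the base $x$ (not just with the preceding subbracket). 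Non-commutation forces $\supp(g_j x_{i_j})$ to lie within $2R$ of $\supp(x)$, hence far from $1_G$, so the outermost bracket vanishes at $1_G$. Density of $\Delta_X$ then extends this to all of $X$. Without the Jacobi-based rewriting of Lemma~\ref{lem:NeedOnlyNonCommu}, iterated brackets can drag support back toward the origin and the radius bound fails --- this is precisely what happens in the non-cellwiseable examples of Theorem~\ref{thm:NotCellwise}.
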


A non-trivial recoding is indeed required even in the case of a one-dimensional full shift, i.e.\ after the vector operations have been cellwised, there can still be infinitely many distinct brackets compatible with the vector shift structure. We briefly study this issue in Section~\ref{sec:Compatibles} and state the classification problem for brackets compatible with a particular vector shift structure.

If $G$ is virtually polycyclic, then the homoclinic points of $X$ are generated by finitely many orbits (essentially due to classical results of Hall), so the condition on their density is sufficient.
We show that not all groups have this property: indeed on the lamplighter group (the non-virtually polycyclic metabelian group $\Z_2 \wr \Z$) we give an example of a vector shift whose homoclinic points are not finitely orbit-generated.

If $G$ is amenable and $X$ satisfies a suitable gluing property (mean TMP), then the assumption that homoclinic points are dense is equivalent to having a trivial topological Pinsker factor. In our non-cellwiseable examples, the closure of the homoclinic points is a subgroup of finite index in $X$, and the quotient is isomorphic to the additive group of the underlying field $K$.

\section{Preliminaries}

\subsection{Symbolic dynamics on groups}

Let $G$ be a group, usually a finitely generated infinite one, and $A$ a finite alphabet.
The \emph{full $G$-shift over $A$} is the set $A^G$, whose elements are \emph{configurations}, equipped with the product topology.
It is a compact metric space where $G$ acts from the left by shifting: $(g x)_h = x_{g^{-1} h}$ for $x \in A^G$, $g, h \in G$.
A $G$-invariant closed subset $X \subset A^G$ is a \emph{$G$-subshift}.
If $Y \subset B^G$ is another $G$-subshift, a \emph{block code} is a $G$-equivariant continuous map $f : X \to Y$.
Each block code is defined by a finite neighborhood $N \subset G$ and a local function $F : A^N \to B$ by $f(x)_g = F((g^{-1} x)|_N)$, and the \emph{radius} of $f$ is the smallest $r \geq 0$ such that the $r$-ball $B_r(1_G) \subset G$ can be a neighborhood of $f$.
A bijective block code is a \emph{topological conjugacy}, and if one exists, we say $X$ and $Y$ are \emph{conjugate}.
If $X = Y$, then $f$ is a \emph{cellular automaton} (CA) on $X$.
A finite product $\prod_{i=1}^n A_i^G$ of full shifts is naturally isomorphic to the full shift $\left( \prod_{i=1}^n A_i \right)^G$ over the product alphabet, and we frequently identify them in order to talk about $n$-ary block codes of the form $f : X^n \to X$ for $X \subset A^G$.
If $n = 0$, we see $f$ as a $G$-invariant element of $X$.

A continuous function $f : X \to X$ on a subshift $X$ is called a \emph{nonuniform CA} if there exists a finite set $N \subset G$ such that for all $g \in G$, $f(x)_g$ is a function of $(g^{-1} x)|_N$ for $x \in X$, and the minimal $r \geq 0$ such that we can choose $N = B_r(1_G)$ (ball of radius $r$) is called the \emph{radius} of $f$.

\subsection{Universal algebra}

A \emph{type} of algebras is a family $\mathcal{T}$ of function symbols $(f_i)_{i \in I}$ of respective arities $(n_i)_{i \in I}$.
A \emph{term} of type $\mathcal{T}$ over a variable set $X$ is either an element $x \in X$ or an expression $f_i(t_1, \ldots, t_{n_i})$ where $f_i$ is a function symbol of arity $n_i$ in $\mathcal{T}$ and each $t_j$ is a term.
An \emph{algebra} of type $\mathcal{T}$ (or a \emph{$\mathcal{T}$-algebra}) is a set $A$ together with functions $F_i : A^{n_i} \to A$ for each function symbol $f_i$. We usually identify $F_i$ with $f_i$, and use the same symbol for both.
A function between ($\mathcal{T}$-)algebras that intertwines the respective functions is a ($\mathcal{T}$-)\emph{homomorphism}, and a bijective homomorphism is an isomorphism.

\begin{remark}
Often we say that a set (or a subshift) $X$ is a $\mathcal{T}$-algebra, without listing the operations. This typically does not leads to confusion, but when $X$ is implicitly carrying some structure, this convention makes it somewhat awkward to drop the structure. This only comes up in Example~\ref{ex:UpToIsomorphism} where we consider a Lie algebraic subshift without its underlying vector space structure.
\end{remark}

A \emph{variety} $\mathcal{V}$ of algebras of type $\mathcal{T}$ is defined by a set of \emph{identities} $(L_j \approx R_j)_{j \in J}$, where $L_j, R_j$ are terms over some abstract set of variables.
An algebra belongs to $\mathcal{V}$ if it satisfies each of its identities for all choices of values for each variable.
For example, the variety of abelian groups is defined by the function symbols $({+}, {-}, 0)$ of arities $2$, $1$ and $0$, as well as the identities $(x + y) + z \approx x + (y + z)$, $x + 0 \approx x$, $x + (-x) \approx 0$ and $x + y \approx y + x$.
A standard reference on this topic is \cite{BuSa81}.

We recall some definitions from \cite{SaTo12d}.
If $\mathcal{V}$ is a variety as above, a $\mathcal{V}$-\emph{subshift} over a group $G$ is a subshift $X \subset A^G$ equipped with block codes $f_i : X^{n_i} \to X$ that give $X$ a $\mathcal{V}$-algebra structure.
For example, group subshifts are subshifts with a group structure given by block codes, and they have been studied extensively in the literature \cite{Ki87,Sc95,BoSc08}.
We say $X$ is \emph{cellwise} if each $f_i$ admits $\{1_G\}$ as a neighborhood.
In this case $A$ is a finite $\mathcal{V}$-algebra and $X$ is a sub-algebra of the direct product $A^G$.
If there exists a cellwise $\mathcal{V}$-algebra $Y \subset B^G$ and a topological conjugacy $\phi : X \to Y$ that is also a $\mathcal{V}$-isomorphism, we say $X$ is \emph{cellwiseable}.

The \emph{affine maps} on a $\mathcal{V}$-algebra $X$ are terms over the variable set $X \cup \{\xi\}$ defined inductively as follows:
\begin{itemize}
\item
  The term $\xi$ is affine. Its depth is $0$.
\item
  If $t$ is an affine map of depth $d$, $f_i : X^{n_i} \to X$ is an $n_i$-ary algebra operation, $0 \leq j < n_i$, and $y_k$ for $0 \leq k < n$, $k \neq j$, are elements of $X$, then $f_i(y_0, \ldots, y_{k-1}, t, y_{k+1}, \ldots, y_{n-1})$ is an affine map of depth $d+1$.
\end{itemize}
Each affine map defines a function $X \to X$ by substituting $\xi$ with the function argument and evaluating the operations in $X$.
For example, if $R$ is a ring and $r, s \in R$, then the term $t = r \cdot \xi + s$ is an affine map, and defines a function $a \mapsto r a + s$ on $R$.
We often identify affine maps with the functions they define, when there is no danger of confusion.
We say $X$ is $k$-\emph{shallow} if each affine map on $X$ defines a function that is equivalent to the function of some affine map of depth at most $k$, and \emph{shallow} if it is $k$-shallow for some $k \in \N$.

If $X$ is a $\mathcal{V}$-subshift, then each affine map on $X$ is a nonuniform CA.
The following result combines Theorem~2 and Theorem~3 in \cite{SaTo12d}.\footnote{In the statement of Thoerem~2, affine maps are mistakenly referred to ``block maps'' although they are nonuniform. See Theorem 4.1.2 in \cite{Sa14} for a more careful statement.}
\begin{lemma}
\label{lem:CellwiseIffInfoCanMove}
Let $\mathcal{V}$ be a variety of algebras, $G$ be a finitely-generated group and $X \subset A^G$ a $\mathcal{V}$-subshift. Then the following are equivalent:
\begin{itemize}
\item $X$ is cellwiseable.
\item There is a uniform bound on the radius of affine maps on $X$ as non-uniform cellular automata.
\end{itemize}
If $X$ is shallow, then it is cellwiseable.
\end{lemma}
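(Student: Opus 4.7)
The plan is to prove the equivalence by conjugation in one direction and a quotient construction in the other, then deduce the shallow case. For the easy direction, suppose $\phi : X \to Y$ is a topological conjugacy and $\mathcal{V}$-isomorphism with $Y \subset B^G$ cellwise. Then every operation on $Y$ has radius $0$, and by induction on depth every affine map on $Y$ is also radius $0$, since substituting fixed values into radius-$0$ operations yields radius-$0$ maps. If $\phi$ has radius $r$ and $\phi^{-1}$ radius $s$, then each affine map $t$ on $X$ factors as $\phi^{-1} \circ t' \circ \phi$ for an affine map $t'$ on $Y$ of radius $0$, and hence $t$ has radius at most $r + s$.

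For the converse, let $R$ uniformly bound the radii of affine maps on $X$. I would build the cellwise recoding as a quotient of $X$ itself. Define $x \sim y$ iff $t(x)_{1_G} = t(y)_{1_G}$ for every affine map $t$ on $X$. The identity term $\xi$ forces $\sim$ to refine equality at $1_G$, while the hypothesis that every affine map has radius $\leq R$ gives the converse bound $x|_{B_R(1_G)} = y|_{B_R(1_G)} \Rightarrow x \sim y$, so $A' := X/{\sim}$ is finite. Let $\phi : X \to (A')^G$ be defined by $\phi(x)_g = [g^{-1} x]_{\sim}$; this is a shift-equivariant injective block code of radius $R$. I then define operations on $A'$ cellwise by $\hat{f}_i([y_1], \ldots, [y_{n_i}]) := [f_i(y_1, \ldots, y_{n_i})]$, push them onto $\phi(X) \subset (A')^G$ coordinatewise, and check that $\phi$ is a $\mathcal{V}$-isomorphism onto its image. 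The identities of $\mathcal{V}$ descend to $A'$ automatically once the operations are well defined.

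The main obstacle is precisely this well-definedness of $\hat{f}_i$: I must show that $y_j \sim y'_j$ for all $j$ implies $f_i(y_1, \ldots, y_{n_i}) \sim f_i(y'_1, \ldots, y'_{n_i})$. The argument is to swap arguments one at a time and observe that, for any affine map $t$, the composite $\xi \mapsto t(f_i(y'_1, \ldots, y'_{j-1}, \xi, y_{j+1}, \ldots, y_{n_i}))$ is again an affine map in $\xi$ by the inductive definition in the excerpt, so the equivalence $y_j \sim y'_j$ transfers through. Once this substitution lemma is in place, the rest is formal. For the shallow case, if $X$ is $k$-shallow then every affine map is equivalent to one of depth at most $k$, hence to a composition of at most $k$ depth-$1$ affine maps each of radius at most $\max_i r_i$ where $r_i$ is the radius of $f_i$, giving a uniform bound $k \cdot \max_i r_i$ and reducing the shallow case to the backward direction already proved.
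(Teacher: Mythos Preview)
The paper does not actually give a proof of this lemma: it simply cites Theorems~2 and~3 of \cite{SaTo12d} (with a footnote correcting a misstatement there). So there is no in-paper argument to compare against, only a pointer to the authors' earlier work.

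That said, your proof is correct and is essentially the standard argument one would expect to find behind that citation. The forward direction is exactly right: conjugating an affine map through $\phi$ and $\phi^{-1}$ bounds its radius by $r+s$. In the backward direction, your equivalence relation $x \sim y \iff \forall t\; t(x)_{1_G} = t(y)_{1_G}$ is the natural congruence, and the key point---that $\xi \mapsto t(f_i(y'_1,\ldots,\xi,\ldots,y_{n_i}))$ is again an affine map, so $\sim$ is compatible with each $f_i$ slot by slot---is handled correctly. The only step you leave implicit is that $\phi$ intertwines the operations, i.e.\ $\phi(f_i(x_1,\ldots,x_{n_i}))_g = \hat f_i(\phi(x_1)_g,\ldots,\phi(x_{n_i})_g)$; this follows immediately from shift-equivariance of $f_i$ and the definition of $\hat f_i$, and is indeed ``formal'' as you say. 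The shallow case is also fine: a depth-$k$ affine map is a composition of $k$ depth-$1$ nonuniform CA each of radius at most $\max_i r_i$, so the composite has radius at most $k \cdot \max_i r_i$.
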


\subsection{Homoclinic points and entropy}

Let $G$ be a group that acts on a compact metrizable group $X$ by continuous automorphisms.
A point $x \in X$ is \emph{homoclinic} if $g x \to 1_X$ as $g \in G$ escapes finite subsets of $G$.
If $X \subset A^G$ is a group shift, this is equivalent to the \emph{support} $\supp(x) = \{ g \in G \;|\; x_g \neq 1_A \}$ being finite.
We denote by $\Delta_X$ the set of homoclinic points of $X$, and call $X / \overline{\Delta_X}$ the \emph{co-homoclinic factor} of $X$.
Homoclinic points play an important role in our results, as they do in the general theory of group dynamical systems: our main results depend on the nontriviality of the co-homoclinic factor.

In the rest of this section, we explain the connection between homoclinic points and the topological Pinsker factor, for amenable groups. This discussion is included as background information, and is not used in the proofs of the main results.
Recall that a countable group $G$ is \emph{amenable} if it admits a \emph{left F{\o}lner sequence}, which is a sequence $F_1, F_2, \ldots$ of finite subsets of $G$ with $\bigcup_n F_n = G$ and $|g F_n \mathbin{\triangle} F_n| / |F_n| \stackrel{n \to \infty}{\longrightarrow} 0$ for each $g \in G$, where ${\triangle}$ denotes symmetric difference. We symmetrically define \emph{right F{\o}lner sequences}.
The \emph{entropy} $h(X)$ of a topological $G$-dynamical system $X$ (a continuous action of $X$ on a compact metric space $(X, d)$) has several equivalent definitions (see~\cite[Section~9.9]{KeLi16}), of which we use the following:
\begin{itemize}
\item
  For $\epsilon > 0$ and $F \subset G$, a set $D \subset X$ is $(\epsilon, F)$-separated if for all $x \neq y \in D$ there exists $g \in F$ with $d(g x, g y) \geq \epsilon$.
  Denote by $\mathrm{sep}(F, \epsilon)$ the maximum cardinality of an $(\epsilon, F)$-separated set.
  Define $h_{\mathrm{sep}}(X, \epsilon) = \limsup_n \log \mathrm{sep}(\epsilon, F_n) / |F_n|$ and $h_{\mathrm{sep}}(X) = \sup_{\epsilon > 0} h_{\mathrm{sep}}(X, \epsilon)$ where $(F_n)_{n \in \N}$ is a F{\o}lner sequence.
\item
  For $\epsilon > 0$ and $F \subset G$, a set $D \subset X$ is $(\epsilon, F)$-spanning if for all $x \in X$ there exists $y \in D$ with $d(g x, g y) < \epsilon$ for all $g \in F$.
  Denote by $\mathrm{spn}(F, \epsilon)$ the minimum cardinality of an $(\epsilon, F)$-spanning set.
  Define $h_{\mathrm{spn}}(X, \epsilon) = \limsup_n \log \mathrm{spn}(\epsilon, F_n) / |F_n|$ and $h_{\mathrm{spn}}(X) = \sup_{\epsilon > 0} h_{\mathrm{spn}}(X, \epsilon)$ where $(F_n)_{n \in \N}$ is a F{\o}lner sequence.
\end{itemize}
Then $h_{\mathrm{sep}}(X) = h_{\mathrm{spn}}(X) = h(X)$.

The \emph{topological Pinsker factor} of a dynamical system, as defined in~\cite{BlLa93} for $\Z$-actions, is the smallest factor that has zero topological entropy.
In the case of an action of an amenable group $G$ by automorphisms of a compact group $X$ it is exactly the factor group $X / \mathrm{IE}(X)$, where $\mathrm{IE}(X)$ is the closed and $G$-invariant \emph{IE-group} of $X$ (which we do not define here).
If the group shift $\Z[G]$ is also left Noetherian, as it is for virtually polycyclic groups $G$, and $X$ is abelian (i.e.\ $G \curvearrowright X$ is an \emph{algebraic action}), the group $\Delta_X$ is dense in $\mathrm{IE}(X)$, and hence $X / \overline{\Delta_X}$ is the topological Pinsker factor.
See~\cite[Section~13]{KeLi16} for more information in the general setting, and see \cite{BoSc08} for information on Pinsker factor in the case of group shifts on abelian groups.

The characterization of the Pinsker factor as the co-homoclinic factor extends to all $G$-group shifts with an additional technical property, as we show below.

\begin{definition}
Let $G$ be a group and $X \subset A^G$ a subshift.
We say $X$ has the \emph{weak topological Markov property} (TMP) if for all finite $F \subset G$ there exists a finite set $F \subset B \subset G$ such that whenever $x, y \in X$ satisfy $x|_{B \setminus F} = y|_{B \setminus F}$, then there exists $z \in X$ with $z|_B = x|_B$ and $z|_{G \setminus F} = y|_{G \setminus F}$.
In the case that $G$ is amenable, we say $X$ has the \emph{mean TMP} if such sets $C = C_n$ exist for some right F{\o}lner sequence $F = F_n$, with $|C_n \setminus F_n| / |F_n| \to 0$ as $n \to \infty$. 
\end{definition}

Weak TMP was defined in the preprint~\cite{BaGoMaSi18}, where it was also proved that all group shifts on countable groups satisfy it, and mean TMP was defined by Sebasti{\'a}n Barbieri in personal communication.
Shifts of finite type on amenable groups have the mean TMP.

\begin{proposition}
Let $G$ be an amenable group and $X \subset A^G$ a group shift.
Then every zero-entropy factor of $X$ factors through $X / \overline{\Delta_X}$.
If $X$ has the mean TMP, then $X / \overline{\Delta_X}$ has zero entropy.
\end{proposition}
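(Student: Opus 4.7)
The proposition splits into two parts, which I address independently.

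For Part 1, my plan is to use the cited identification of the topological Pinsker factor of an amenable action on a compact group by automorphisms as the group quotient $X/\mathrm{IE}(X)$. Since every zero-entropy factor must factor through the Pinsker factor, it suffices to show $\overline{\Delta_X} \subseteq \mathrm{IE}(X)$; by closedness of $\mathrm{IE}(X)$ this reduces to proving that $(0_X, h)$ is an IE-pair for every $h \in \Delta_X$ (writing $X$ additively, the case of interest being abelian group shifts). To this end, I fix cylinder neighborhoods $U = [0|_F]$ and $V = [h|_F]$ with finite $F \supseteq \supp(h)$ and set the symmetric obstruction $E = F \cdot \supp(h)^{-1} \cup \supp(h) \cdot F^{-1}$. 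Inside any finite $H \subseteq G$ a greedy maximality argument produces $\Lambda \subseteq H$ with $|\Lambda| \geq |H|/|E|$ satisfying $g_1 g_2^{-1} \notin E$ for distinct $g_1, g_2 \in \Lambda$. For each $\sigma \colon \Lambda \to \{U, V\}$, the configuration $y_\sigma := \sum_{g \in \Lambda,\, \sigma(g) = V} g^{-1} h$ lies in $X$, and the packing condition forces $(g_0 y_\sigma)|_F$ to equal $h|_F$ or $0|_F$ according to $\sigma(g_0)$ for each $g_0 \in \Lambda$, yielding $g_0 y_\sigma \in \sigma(g_0)$ and the IE-pair property with density $c = 1/|E|$.

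For Part 2, let $F_n$ be a symmetric Folner sequence with mean TMP thickening $C_n \supseteq F_n$ satisfying $|C_n \setminus F_n|/|F_n| \to 0$; by the Folner property also $|F_n F' \setminus F_n|/|F_n| \to 0$ for every finite $F' \subseteq G$. The key consequence of mean TMP is that whenever $y, \bar x \in X$ agree on the annulus $C_n \setminus F_n$, the witness $z \in X$ with $z|_{C_n} = \bar x|_{C_n}$ and $z|_{G \setminus F_n} = y|_{G \setminus F_n}$ has $\supp(z - y) \subseteq F_n$, so $z - y \in \Delta_X$ and $\pi(y) = \pi(z)$ in $Y := X/\overline{\Delta_X}$. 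Fixing a template $\bar x_w \in X$ extending each boundary pattern $w \in X|_{C_n \setminus F_n}$ and letting $z_y$ be the witness for $\bar x = \bar x_{y|_{C_n \setminus F_n}}$, the restriction $z_y|_{F_n F'}$ --- with $F'$ the uniform-continuity modulus of the quotient $X \to Y$ at scale $\epsilon$ --- is determined by $y|_{(C_n \setminus F_n) \cup (F_n F' \setminus F_n)}$ alone. Since this determining set has cardinality $o(|F_n|)$, one obtains an $(F_n, \epsilon)$-spanning set for $Y$ of size $\exp(o(|F_n|))$, giving $h(Y) = 0$.

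The principal obstacle I anticipate is the Folner-convention mismatch in Part 2: mean TMP is stated for a right Folner sequence, whereas the entropy estimate benefits from symmetric enlargements so that $|F_n F' \setminus F_n|$ is sublinear in both directions. Refining to a symmetric subsequence should suffice, provided the weak TMP thickenings adapt with the same asymptotic growth; for shifts of finite type this is automatic since the thickening depends only on $F$ through a fixed "defect set", and I expect a similar argument to work more generally. A minor point in Part 1 is the symmetry of $E$, which is essential for the packing condition to control interference between shifted copies of $h$ from both directions; taking $E$ as the union $F \supp(h)^{-1} \cup \supp(h) F^{-1}$ handles this cleanly.
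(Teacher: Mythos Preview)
Your Part~1 takes a genuinely different route from the paper. The paper argues the contrapositive directly: given a factor map $\phi$ that separates some $x$ from $xz$ with $z \in \Delta_X$, it uses Haar measure to find, for each $F_n$, a base point $y_n$ with a positive-density set of translates landing in the cylinder $[x|_B]$, then packs shifted copies of $z$ onto $y_n$ to produce an $(\epsilon,F_n)$-separated set in $\phi(X)$ of size $2^{c|F_n|}$. You instead invoke the Kerr--Li identification of the Pinsker factor with $X/\mathrm{IE}(X)$ and verify $\Delta_X \subseteq \mathrm{IE}(X)$ by the packing construction $y_\sigma$. Both are correct; the paper's argument is self-contained and avoids the IE machinery, while yours is shorter once that black box is granted. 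Two small points: your computation that $(g_0 y_\sigma)|_F$ equals $h|_F$ or $0|_F$ tacitly assumes cellwise group operations (so that factors vanishing on $F$ contribute identity on $F$), which is fine after the recoding the paper also performs; and you should remark that the cylinders $[0|_F],[h|_F]$ form a neighborhood base at $(0,h)$, so checking the IE condition for those suffices.

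For Part~2 you use the same spanning-set idea as the paper but with the roles of $y$ and the template reversed: you glue $\bar x_w$ inside $C_n$ and $y$ outside $F_n$ (so $z_y-y$ is homoclinic), whereas the paper glues $y$ inside $C_n$ and the template $x_P$ outside (so $z x_P^{-1}$ is homoclinic). The paper's orientation is cleaner on exactly the point you flag. First, its spanning set is simply $\{\phi(x_P)\}$, of size $|A|^{|C_n\setminus F_n|}$, with no extra $y|_{F_nF'\setminus F_n}$ data to carry. Second, it dissolves the F{\o}lner mismatch: one only needs that $F_n' = \{g : B^{-1}g \subset F_n^{-1}\}$ is a left F{\o}lner sequence (automatic since $F_n^{-1}$ is), and then $z|_{F_n}=y|_{F_n}$ already controls $d(g\phi(y),g\phi(z))$ for all $g\in F_n'$. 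Your proposed repair via a symmetric subsequence is not guaranteed to preserve the mean TMP bound on $|C_n\setminus F_n|$; swapping the roles as the paper does is the correct fix.
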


In particular, in the latter case the co-homoclinic factor and the topological Pinsker factor coincide.

\begin{proof}
By Lemma~\ref{lem:CellwiseIffInfoCanMove} may assume $X$ is cellwise, i.e.\ $A$ is a group and operations of $X$ are cellwise operations of $A$. We first show that every zero-entropy factor factors through $X / \overline{\Delta_X}$. For this, we show the contrapositive, that if the congruence defined by $\overline{\Delta_X}$ is not in the kernel of a factor map, then the image of the factor map has positive entropy. So let $\phi : X \to Y$ be a factor map and assume $\phi(x) \neq \phi(x')$ for some $x \in X$ and $x' \in x \cdot \overline{\Delta_X}$.
By continuity, there exists $z \in \Delta_X$ with $\phi(x) \neq \phi(x z)$.
Choose $\epsilon > 0$ small and $\supp(z) \subset B \subset G$ finite such that $d(\phi(y), \phi(y')) \geq \epsilon$ for all $y, y' \in X$ with $y|_B = x|_B$, $y'|_B = xz|_B$.

Let $\mu$ be the normalized Haar measure on $X$, and denote $E = \{ y \in X \;|\; y|_B = x|_B \}$.
Then $a =: \mu(E) > 0$, so in particular, by shift-invariance of $\mu$, for arbitrarily large $n$ there exists $y_n \in X$ with $|K_n|/|F_n| > a/2$, where $K_n = \{ g \in F_n \;|\; g y_n \in E \}$.
Choose a subset $L_n \subset K_n$ with $|L_n| \geq |K_n| / |B B^{-1}|$ and $B B^{-1} g \cap L_n = \{g\}$ for each $g \in L_n$.
For each $L \subset L_n$ we define $z^n_L = y_n \cdot \prod_{g \in L} g^{-1} z \in X$ (note that the product commutes).
We claim that $\{ \phi(z^n_L) \;|\; n \in \N, L \subset L_n \}$ forms an $(\epsilon, F_n)$-separated set in $Y$ of size at least $2^\frac{a |F_n|}{2 |B B^{-1}|}$.
Namely, if $L \neq L'$ and $g \in L \setminus L'$, then $(g z^n_L)|_B = (x z)|_B$ but $(g z^n_{L'})|_B = x|_B$, so $d(\phi(g z^n_L), \phi(g z^n_{L'})) \geq \epsilon$.
This implies $h_{\mathrm{sep}}(Y, \epsilon) \geq a \log 2 / (2 |B B^{-1}|)$ for arbitrarily small $\epsilon$, and hence positive entropy for $Y$.

For the second claim, denote $Y = X / \overline{\Delta_X}$ and choose $\epsilon > 0$.
Take $B \ni 1_G$ such that $x|_B = y|_B$ for $x, y \in X$ implies $d(\phi(x), \phi(y)) < \epsilon$. Equivalently, $d(\phi(x), \phi(y)) < \epsilon$ whenever $bx_{1_G} = by_{1_G}$ holds for all $b \in B^{-1}$.
Let $(F_n)_n$ be the right F{\o}lner sequence from the definition of mean TMP and take $C_n \subset G$ given for $F_n$ by the mean TMP. Then $F_n^{-1}$ is a left F{\o}lner sequence, and a standard calculation shows that then also $F_n' = \{ g \in G \;|\; B^{-1}g \subset F_n^{-1} \}$ is a left F{\o}lner sequence, thus it is enough to show entropy along this F{\o}lner sequence is zero.

For each pattern $P$ of shape $C_n \setminus F_n$, choose a configuration $x_P \in X$ with $x_P|_{C_n \setminus F_n} = P$.
We claim that the configurations $\phi(x_P)$ form an $(\epsilon, F_n')$-spanning set in $Y$ of size at most $|A|^{|C_n \setminus F_n|}$. This indeed suffices, as $|A|^{|C_n \setminus F_n|}$ is subexponential $|F_n|$, thus in $|F_n'|$ since $|F_n'| \geq |F_n|/2$ for large enough $n$, hence implies $h_{\mathrm{spn}}(Y, \epsilon) = 0$.
To show this set is spanning, let $y \in X$ be arbitrary.
Then some $x_P$ satisfies $x_P|_{C_n \setminus F_n} = y|_{C_n \setminus F_n}$, and by our choice of $C_n$ there exists $z \in X$ with $z|_{F_n} = y|_{F_n}$ and $z|_{G \setminus F_n} = x_P|_{G \setminus F_n}$.
We have $\phi(x) = \phi(z)$ since $x z^{-1} \in \Delta_X$, thus also $\phi(gx) = \phi(gz)$ for all $g \in G$. Since $bg \in F_n^{-1}$ for all $b \in B^{-1}$ and $g \in F_n'$, we have $bgy_{1_G} = bgz_{1_G}$ for all such $b, g$, thus
$d(\phi(g y), \phi(g x)) = d(\phi(g y), \phi(g z)) < \epsilon$ for all $g \in F_n'$, as claimed.
\end{proof}

An example of Meyerovitch \cite{Me19} shows that for subshifts without the mean TMP, over the infinite direct product $\oplus \Z_2$ one can have an abelian group shift $X$ with positive entropy and trivial $\Delta_X$. For a finitely-generated example, note that as in Proposition~\ref{prop:NotFOG}, by acting independently on cosets, one obtains an abelian group shift with the same property on the lamplighter group $\Z_2 \wr \Z$.

\subsection{Lie algebras}

Fix a field $K$, which in this article will always be finite. The variety of Lie algebras over $K$ is the variety with a scalar multiplication operation $x \mapsto a \cdot x$ (which we usually write as just $ax$) for each $a \in K$, a nullary operation $0$, a binary addition $(x, y) \mapsto x+y$, and a binary bracket operation $(x, y) \mapsto [x, y]$, such that scalar multiplication, $0$ and addition satisfy the vector space axioms (when $K$ is finite, this is a finite list of axioms), and the bracket satisfies \emph{bilinearity}
\[ [ax + by, z] = a[x, z] + b[y, z], \qquad [z, ax + by] = a[z, x] + b[z, y], \]
\emph{reflexivity} $[x,x] = 0$ and the \emph{Jacobi identity}
\[ [[x, y], z] + [[y, z], x] + [[z, x], y]  = 0. \] 
These properties imply \emph{anticommutativity} $[x, y] = -[y, x]$. 

Two vectors $u, v$ in a Lie algebra \emph{commute} if $[u, v] = 0$. An \emph{ideal} in a Lie algebra $A$ is a subalgebra $B \subset A$ satisfying $[B, A] \subset B$. A subspace of a Lie algebra $A$ is \emph{finite-dimensional} if it is finite-dimensional as a vector space.

We write the ideal generated by any set $B \subset A$ as $[B, A]$, and the vector space generated by $B \subset A$ as $\langle B \rangle$. Inductively define $[a_0, ..., a_k] = [[a_0, ..., a_{k-1}], a_k]$ for $k \geq 3$. We refer to such expressions as simply \emph{brackets}, and $k$ is the \emph{depth} of the bracket. We call $a_0$ the \emph{base} of the bracket.

\begin{lemma}
\label{lem:NeedOnlyNonCommu}
Suppose $A$ is a Lie algebra over a field $K$ generated by vectors $(e_i)_{i \in I}$. Let $B \subset A$ be arbitrary. Then 
\[ [B, A] = \langle \{ [b, e_{i_1}, ..., e_{i_k}] \;|\; b \in B, \forall j: [b, e_{i_j}] \neq 0 \} \rangle. \]
\end{lemma}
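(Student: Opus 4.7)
The inclusion $\supseteq$ is immediate because every bracket on the right-hand side lies in the ideal $[B,A]$ by definition. The content is in $\subseteq$, and I would begin by reducing it to showing that every left-normed bracket $t = [b, e_{i_1}, \ldots, e_{i_k}]$ with $b \in B$ and all $e_{i_j}$ among the generators lies in $\langle S \rangle$, where $S$ denotes the spanning set on the right. This reduction is standard: $[B,A]$ is spanned by nested brackets $[b, a_1, \ldots, a_k]$ with $a_j \in A$, and since $A$ is Lie-generated by $(e_i)_{i \in I}$, multilinearity of the bracket together with iterated use of the Jacobi identity in the form $[u,[x,y]] = [u,x,y] - [u,y,x]$ lets us rewrite every such nested bracket as a linear combination of left-normed brackets of $b$ with generators.

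For the main argument I would proceed by induction, calling an index $j$ \emph{offending} when $[b, e_{i_j}] = 0$. If $t$ has no offending index, $t \in S$ directly. If the smallest offender sits at $j^\star = 1$, then $t = [[b,e_{i_1}],\ldots] = 0$. Otherwise, for $j^\star \geq 2$, I would iterate the Jacobi swap
\[
[u, e_{i_{j^\star - \ell}}, e_{i_{j^\star}}] \;=\; [u, e_{i_{j^\star}}, e_{i_{j^\star - \ell}}] \;+\; \bigl[u,\, [e_{i_{j^\star - \ell}},\, e_{i_{j^\star}}]\bigr]
\]
for $\ell = 1, 2, \ldots, j^\star - 1$ to transport the offending $e_{i_{j^\star}}$ leftward past each of $e_{i_{j^\star - 1}}, \ldots, e_{i_1}$. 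After $j^\star - 1$ swaps, the surviving principal term begins with $[b, e_{i_{j^\star}}, \ldots] = 0$, so $t$ equals the sum of accumulated error terms $E_\ell = [u_\ell,\, [e_{i_{j^\star - \ell}}, e_{i_{j^\star}}],\, \mathrm{rest}_\ell]$, each a bracket of $b$ in which one slot has been replaced by the element $[e_{i_{j^\star - \ell}}, e_{i_{j^\star}}] \in A$ rather than a generator.

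The part I expect to require the most care is controlling the error terms $E_\ell$ within the induction. As mixed brackets with a single $A$-valued interior slot, each $E_\ell$ has strictly fewer slots ($k - 1$ rather than $k$), which suggests inducting on slot count; but when the interior element is expanded back into generators via the Jacobi identity of step one, the resulting left-normed brackets can again have depth $k$, so a naive depth induction is circular. The cleanest resolution I would pursue is to strengthen the inductive statement to apply to all brackets $[b, a_1, \ldots, a_r]$ with $a_j$ \emph{arbitrary} elements of $A$, inducting on the slot count $r$ and invoking the reduction of step one only at the final stage to return to generators; since each $E_\ell$ has strictly fewer slots than $t$, the strengthened hypothesis disposes of them and closes the induction, yielding $t \in \langle S \rangle$ as required.
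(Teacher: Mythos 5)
You have correctly isolated the crux: after the Jacobi swaps the offending generator annihilates the base, so everything is pushed into the error terms $E_\ell = [u_\ell, [e_{i_{j^\star-\ell}}, e_{i_{j^\star}}], \mathrm{rest}_\ell]$, and the proof lives or dies on how these are absorbed. Unfortunately your proposed fix does not close. To prove the strengthened claim for brackets $[b, a_1, \ldots, a_r]$ with arbitrary entries you must at some point expand each $a_j$ into Lie monomials of generators and re-left-norm, and this inflates the slot count back above $r$; the error terms of the resulting generator brackets then again have at least $r$ slots, so the induction on $r$ never bottoms out. Already at $r=1$ the circle is visible: $[b,[e,e']] = [b,e,e'] - [b,e',e]$, and applying your swap to either term on the right regenerates $[b,[e,e']]$ exactly.

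This is not a repairable bookkeeping issue, because the statement is false at the stated level of generality. In the free Lie algebra on $x,y$ with $B = \{x\}$, reflexivity forces every good bracket to have the form $[x,y,\ldots,y]$, so the right-hand side meets the degree-$3$ component only in the line spanned by $[[x,y],y]$; but the ideal generated by $x$ contains $[[x,y],x]$, which has multidegree $(2,1)$ and hence is not a multiple of $[[x,y],y]$. The paper's proof takes essentially your route (one Jacobi swap at a time, organized by a lexicographic minimality argument on a triple $(d,\ell,p)$ rather than an explicit induction) and faces the same error term $[b,\ldots,[e_{i_{p-1}},e_{i_p}],\ldots]$; what silently saves it in the intended application is that there the generators (the shifts $g x_i$ of the homoclinic generators) span a set closed under the bracket, so $[e_{i_{p-1}},e_{i_p}]$ is a \emph{linear combination of generators} and the error term expands by bilinearity into generator brackets of depth $d-1$, which cannot disturb $(d,\ell)$. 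If you add the hypothesis that $\langle e_i \mid i \in I \rangle$ is a subalgebra, your induction on depth closes in the same way, since each $E_\ell$ then becomes a combination of generator brackets of strictly smaller depth; without some such hypothesis, no argument can succeed.
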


Note that directly by bilinearity of the bracket, $[B, A]$ is generated by such expressions where $e_{i_j}$ does not commute with $[b, e_{i_1}, ..., e_{i_{j-1}}]$. What requires the Jacobi identity is that none of the $e_{i_j}$ commute with the base $b$.

\begin{proof}
Let $a \in [B, A]$. By bilinearity and anticommutativity of the bracket and the definition of the ideal generated by $B$, we can write $a$ as a linear combination of brackets of the form $[b, e_{i_1}, ..., e_{i_k}]$ where $b \in B$. Thus, it is enough to make the brackets satisfy $[b, e_{i_j}] \neq 0$ for all $j$. Say a bracket is \emph{bad} if $[b, e_{i_j}] = 0$ for some $j$ and call the minimal such $j$ the \emph{bad index}.

Suppose that whenever $a$ is written as a linear combination of brackets, at least one bracket is bad. Consider then all possible ways to write $a$ as a linear combination of such brackets, and to each such expression associate the tuple $(d, \ell, p)$ where $d$ is the maximal depth among the bad brackets, $\ell$ is the number of brackets of maximal depth, $p$ is the minimal bad index among bad brackets $[b, e_{i_1}, ..., e_{i_k}]$ achieving $k = d$.

Write $a$ as a linear combination of brackets so that $(d, \ell, p)$ is lexicographically minimal (in particular $d$ is globally minimal), and consider a bad bracket  $[b, e_{i_1}, ..., e_{i_d}]$ with $[b, e_{i_p}] \neq 0$.

If $p = 1$, then $[b, e_{i_1}] = 0$ and by bilinearity we may remove the bracket entirely, making $\ell$ smaller but without increasing $d$. Suppose then $p \geq 2$. Then writing Jacobi's identity as
\[ [[x, y], z] = [x, [y, z]] + [[x, z], y], \]
setting $x = [b, e_{i_1}, ..., e_{i_{p-2}}]$, $y = e_{i_{p-1}}$, $z = e_{i_p}$ and applying to the corresponding subbracket of $[b, e_{i_1}, ..., e_{i_p}, ..., e_{i_k}]$ gives
\begin{align*}
[b, e_{i_1}, ..., e_{i_{p-2}}, e_{i_p-1}, e_{i_p}, ..., e_{i_k}] &= [b, e_{i_1}, ..., e_{i_{p-2}}, [e_{i_{p-1}}, e_{i_p}], e_{i_{p}+1} ... e_{i_k}] \\
&+ [b, e_{i_1}, ..., e_{i_{p-2}}, e_{i_p}, e_{i_{p-1}}, e_{i_p+1},...,e_{i_k}].
\end{align*}
The second term is clearly still bad, but its bad index is smaller. Thus if we replace the LHS by the RHS in the minimal expression for $a$, we obtain a contradiction since $d$ and $\ell$ are not modified, but $p$ is decreased.
\end{proof}

While the technical statement of Lemma~\ref{lem:NeedOnlyNonCommu} is what we use in our application, we note the following slightly nicer proposition that contains the essense of it (and at least in the case of $\Z$, could be used directly).

\begin{proposition}
\label{prop:EventualCommu}
Suppose $A$ is a Lie algebra over a field $K$ generated by vectors $(e_i)_{i \in I}$ such that every $e_i$ commutes with all but finitely many $e_j$. Then every finitely-generated ideal of $A$ is finite-dimensional.
\end{proposition}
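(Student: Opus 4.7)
The plan is to invoke Lemma~\ref{lem:NeedOnlyNonCommu} to describe a finitely-generated ideal by a controlled spanning set, and then use the commutativity hypothesis to force the index range of that spanning set to be finite.

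The first step is to show that the commutativity hypothesis propagates from the generators to all of $A$: for every $a \in A$ the set $T(a) := \{i \in I \mid [a, e_i] \neq 0\}$ is finite. Since $A$ is generated as a Lie algebra by the $e_i$'s, the element $a$ is a linear combination of iterated brackets in finitely many $e_i$'s, and thus involves only finitely many indices, say those in a finite set $S(a) \subseteq I$. The Jacobi identity makes $\mathrm{ad}(e_j) = [e_j, \cdot]$ a derivation, so by induction on the depth of brackets, $e_j$ commutes with $a$ whenever it commutes with every $e_i$ with $i \in S(a)$. By hypothesis, each $N(i) := \{j \mid [e_i, e_j] \neq 0\}$ is finite, hence $T(a) \subseteq \bigcup_{i \in S(a)} N(i)$ is finite.

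Next, let $B \subset A$ be a finite generating set for an ideal $\mathfrak{a}$. Lemma~\ref{lem:NeedOnlyNonCommu} expresses $\mathfrak{a}$ as the linear span of brackets $[b, e_{i_1}, \ldots, e_{i_k}]$ with $b \in B$ and $[b, e_{i_j}] \neq 0$ for every $j$. By the first step, each index $i_j$ lies in $T(b) \subseteq T(B) := \bigcup_{b \in B} T(b)$, which is a finite subset of $I$. Hence every spanning bracket involves only a fixed finite collection of generators $\{e_i \mid i \in T(B)\}$, together with one of finitely many bases $b$.

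The main obstacle is then to conclude that the span of these brackets is finite-dimensional despite the depth $k$ being a priori unbounded. I expect this to require a further use of the Jacobi identity in the spirit of the bad-bracket rearrangement in the proof of Lemma~\ref{lem:NeedOnlyNonCommu}: at each stage, an iterated bracket can be replaced by a combination of shallower ones, or by brackets whose base lies in a larger but still finite list of elements whose $T$-sets remain finite. Carried out carefully, this normalization should show that every bracket $[b, e_{i_1}, \ldots, e_{i_k}]$ reduces modulo already-generated brackets to one of a bounded depth depending only on $B$ and $T(B)$, yielding a finite-dimensional span. Establishing that this reduction terminates is the step I anticipate to be genuinely delicate, and is where the full strength of the hypothesis (not merely finiteness of each $T(b)$, but the propagation of finiteness through the bracket) must be used.
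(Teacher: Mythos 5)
There is a genuine gap, and it sits exactly where you flagged it: the final reduction you ``anticipate'' is the entire content of the proposition, and under your reading of the hypothesis it cannot be carried out, because the statement would then be false. You read ``generated by vectors $(e_i)_{i\in I}$'' as generation as a Lie algebra. Take $A$ to be the free Lie algebra on two generators $x, y$: your hypothesis is then vacuously satisfied (each generator commutes with all but finitely many others, there being only two), yet the ideal generated by $x$ contains the linearly independent elements $[x,y], [x,y,y], [x,y,y,y],\dots$ and is infinite-dimensional. These are exactly the spanning brackets produced by Lemma~\ref{lem:NeedOnlyNonCommu}; they involve only two generators, their depth is unbounded, and no Jacobi rearrangement collapses them. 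So your steps establishing finiteness of $T(a)$ and of the index set $T(B)$ are correct (the first is essentially automatic) but cannot suffice on their own.

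The proposition is true, and the paper's proof works, under the reading that $A$ is \emph{spanned as a vector space} by the $e_i$ --- which is the situation in the intended application, where $\Delta_X$ is orbit-generated as an additive group. Under that reading the missing step is immediate and needs no further use of the Jacobi identity: for fixed $j$, every $a' \in A$ is a finite linear combination $\sum_l c_l e_l$, so by bilinearity $[a', e_j]$ lies in the span of the finitely many vectors $[e_l, e_j]$ with $[e_l, e_j] \neq 0$; hence $[A, e_j]$ is finite-dimensional. Every spanning bracket $[b, e_{i_1}, \dots, e_{i_k}]$ of positive depth equals $[c, e_{i_k}]$ for some $c \in A$, with $i_k$ ranging over the finite set $T(B)$, so $[B,A] \subset \langle B \rangle + \sum_{j \in T(B)} [A, e_j]$, a finite sum of finite-dimensional spaces. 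In short, the depth of the spanning brackets is not bounded and does not need to be: what bounds the dimension is that the last generator applied ranges over a finite set and each $[\,\cdot\,, e_j]$ has finite-dimensional image --- and the latter is precisely what fails if the $e_i$ only generate $A$ as a Lie algebra.
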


\begin{proof}
Under the assumptions of the proposition, for any fixed $a \in A$, the brackets $[a', a]$ for $a' \in A$ generate a finite-dimensional space. Namely, for $a = e_i$, writing $a'$ in terms of generators and applying bilinearity, the fact all but finitely many generators commute with $e_i$ implies that we see only finitely many generators in the simplified expression. For general $a$, write $a$ in terms of generators and again apply bilinearity.

To prove the proposition, it is enough to show that $B = \{e_i \;|\; i \in F\}$ generates a finite-dimensional ideal for any finite $F \subset I$. The previous lemma gives
\[ [B, A] = \langle \{ [b, e_{i_1}, ..., e_{i_k}] \;|\; b \in B, \forall j: [b, e_{i_j}] \neq 0 \} \rangle, \]
so by the assumption $[B, A]$ is generated as a vector space by brackets involving only finitely many generators $e_i$. In particular the top element $e_{i_k}$ can take at most finitely many values, and we conclude by the previous paragraph.
\end{proof}

\section{Lie algebraic subshifts}

\subsection{Cellwiseability}

Let $X \subset A^G$ be a shift-invariant subset of a group shift. We say $X$ is \emph{orbit-generated} by a set $Y$ if $X = \langle \{gy \;|\; g \in G, y \in Y\} \rangle$ (in the algebraic sense, as a discrete group). We say $X$ is \emph{finitely orbit-generated} if it is orbit-generated by a finite set. If $X$ is a group shift, write $\Delta_X$ for the (shift-invariant) subgroup of $X$ consisting of all $x \in X$ such that $g x \rightarrow 1_X$ as $g$ escapes finite subsets of $G$.

\begin{theorem}
\label{thm:MainTechnical}
Let $G$ be a finitely-generated group and $X \subset A^G$ an internal Lie algebra over a finite field $K$ in the category of $G$-subshifts. If $X = \overline{\Delta_X}$ and $\Delta_X$ is finitely orbit-generated (as a subgroup of $(X,+)$), then the operations of $X$ can be recoded to be cellwise.
\end{theorem}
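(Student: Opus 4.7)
The plan is to apply Lemma~\ref{lem:CellwiseIffInfoCanMove}: it suffices to bound the radius of all affine maps on $X$ as non-uniform cellular automata, uniformly. Since $K$-vector spaces form a shallow variety, only the Lie bracket can create unbounded radii. Using bilinearity to extract additions and scalar multiplications as constant terms, and anticommutativity to move $\xi$ to the left of every bracket at the cost of a sign, every affine map decomposes as $\xi \mapsto a \cdot [\xi, z_1, \ldots, z_k] + c$ with $a \in K$, $c \in X$, $z_j \in X$, $k \geq 0$. So I reduce to bounding the radius of $\xi \mapsto [\xi, z_1, \ldots, z_k]$ uniformly in $k$ and in $z_1, \ldots, z_k$.

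Fix orbit-generators $y_1, \ldots, y_n$ of $\Delta_X$ and let $r_0$ be the radius of the bracket as a block code with local rule $F$. Since $[0, Y] = [X, 0] = 0$, we have $F(\vec 0, p) = F(p, \vec 0) = 0$ for every pattern $p$ arising in $X$, and hence $\supp([X, Y]) \subset \supp(X) B_{r_0} \cap \supp(Y) B_{r_0}$ for all $X, Y \in X$. Consequently, a generator-shift $g y_j$ fails to commute with a homoclinic $\eta \in \Delta_X$ only when $\supp(g y_j) \cap \supp(\eta) B_{2 r_0} \neq \emptyset$, which in turn constrains $\supp(g y_j)$ to lie in $\supp(\eta) B_M$ for a constant $M$ depending only on $r_0$ and $\max_j \mathrm{diam}(\supp(y_j))$. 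The core claim is: for $R > M + r_0$ and any $\eta \in \Delta_X$ with $\supp(\eta) \cap B_R(1_G) = \emptyset$, $[\eta, z_1, \ldots, z_k]_{1_G} = 0$ for all $z_j \in \Delta_X$ and all $k \geq 0$. Indeed, by bilinearity each $z_j$ expands as a linear combination of generator-shifts, placing the bracket in the ideal $[\{\eta\}, \Delta_X]$; Lemma~\ref{lem:NeedOnlyNonCommu}, applied with generators the $G$-shifts of the $y_j$, then rewrites it as a linear combination of chains $[\eta, e_1, \ldots, e_m]$ in which every $e_r$ individually non-commutes with $\eta$. The support bound above yields $\supp([\eta, e_1, \ldots, e_m]) \subset \supp(e_m) B_{r_0} \subset \supp(\eta) B_{M + r_0}$, which does not contain $1_G$.

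To extend the core claim from $\Delta_X$ to all of $X$, I use density and continuity. For any $\xi \in X$ with $\xi|_{B_R} = 0$, any sequence $\eta_n \to \xi$ with $\eta_n \in \Delta_X$ must eventually satisfy $\eta_n|_{B_R} = 0$ by finiteness of the alphabet, hence $\supp(\eta_n) \cap B_R = \emptyset$. Approximating each $z_j \in X$ by homoclinic points $w_j^{(m)} \to z_j$ and using continuity of the bracket, the core claim gives $[\eta_n, w_1^{(m)}, \ldots, w_k^{(m)}]_{1_G} = 0$; letting first $m \to \infty$ and then $n \to \infty$ produces $[\xi, z_1, \ldots, z_k]_{1_G} = 0$. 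By linearity in $\xi$, this establishes the uniform radius bound $R$, completing the proof via Lemma~\ref{lem:CellwiseIffInfoCanMove}. The technical heart is the support-tracking step of the core claim: Lemma~\ref{lem:NeedOnlyNonCommu} is indispensable here, because without the Jacobi-based rewriting it provides, supports of iterated brackets could in principle drift over $1_G$ as the depth grows, and the whole argument would collapse.
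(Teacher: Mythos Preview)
Your proof is correct and follows essentially the same approach as the paper: reduce to the normal form $\xi \mapsto a[\xi, z_1, \ldots, z_k] + c$, apply Lemma~\ref{lem:NeedOnlyNonCommu} to rewrite the ideal generated by a homoclinic $\eta$ in terms of generator-shifts that individually non-commute with $\eta$, use the resulting support constraint to conclude the bracket vanishes at $1_G$, and finish by density and continuity. Your support-tracking via $\supp([\eta,e_1,\ldots,e_m]) \subset \supp(e_m) B_{r_0}$ is a slightly cleaner packaging of the same observation the paper makes about the outermost local rule seeing the zero pattern in its second argument.
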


\begin{proof}
We may assume the vector operations of $X$ are cellwise: vector spaces over $K$ are shallow so Lemma~\ref{lem:CellwiseIffInfoCanMove} applies. So assume $A$ is a finite-dimensional vector space over $K$ and $X \subset A^G$ has the vector operations induced by those of $A$. By the assumption there exists a finite set $x_1, \ldots, x_k$ such that shifts of the $x_i$ generate $\Delta_X$ (in the algebraic sense), i.e. $\Delta_X = \langle g x_i \;|\; g \in G, i \in [1,k] \rangle$.

We now claim that affine maps of $X$ have bounded radius, which suffices by Lemma~\ref{lem:CellwiseIffInfoCanMove}. First for affine maps in a Lie algebra we show the normal form $\xi \mapsto a\cdot[\xi, y_1, \ldots, y_{k-1}] + y_k$ where $a \in K$ (recall that multiplication by $a \in K$ is a unary operation of the algebra). By induction, if $T$ is a term in this normal form, we need to show the same for an affine map with depth one higher. The most difficult case of the induction step is the bracket operation, and indeed
\begin{align*}
[T, y_{k+1}] &= [a\cdot[\xi, y_1, \ldots, y_{k-1}] + y_k, y_{k+1}] \\
&= a\cdot[[\xi, y_1, \ldots, y_{k-1}] + a^{-1} \cdot y_{k}, y_{k+1}] \\
&= a \cdot [[\xi, y_1, \ldots, y_{k-1}], y_{k+1}] + [y_k, y_{k+1}].
\end{align*}
is of the claimed form. 

Suppose $x, y \in X$ agree in a large ball $B_r(1_G) \subset G$ around the origin with respect to the word metric $d$ of $G$. It is enough to show that if $r$ is large enough, then $t(x)_{1_G} = t(y)_{1_G}$ for any affine map $t(\xi) = a \cdot [\xi, y_1, \ldots, y_{k-1}] + y_k$. Since the bracket is bilinear, we may assume $y = y_k = 0$.

First, consider $x \in \Delta_X$. It is easy to see that $[\{x\}, \Delta_X] = [\{x\}, X] \subset \Delta_X$. It is also easy to see that $t(x)_{1_G} = 0$ holds for every choice of $k$ and $y_1, \ldots, y_{k-1}$ if and only if the ideal generated by $x$ contains an element that is nonzero at the origin of $G$. By Lemma~\ref{lem:NeedOnlyNonCommu}, we have
\begin{equation}
\label{eq:IdealFormula}
[\{x\}, \Delta_X] = \langle \{ [x, g_{m_1} x_{i_1}, \ldots, g_{m_k} x_{i_k}] \;|\; \forall j: [x, g_{m_j} x_{i_j}] \neq 0 \} \rangle.
\end{equation}

Let $F$ be the union of supports (group elements containing nonzero values) of the homoclinic generators $x_i$. If
\[ d_{\mathrm{min}}(\supp(x), \supp(g_{m_j} x_{i_j})) \geq d_{\mathrm{min}}(\supp(x), g_{m_j} F) \geq 2R \]
then $[x, g_{m_j} x_{i_j}] = 0^G$, where $R$ is the radius of the bracket as a block map, and $d_{\mathrm{min}}(K, K') = \min_{k \in K, k' \in K'} d(k, k')$. Namely, the local rule of the bracket satisfies $[P, Q] = 0$ if at least one of $P, Q \in A^{B_R(1_G)}$ is the all-$0$ pattern. Then for each $j$ in~\eqref{eq:IdealFormula}, we have $d_{\mathrm{min}}(\supp(x), \supp(g_{m_j} x_{i_j})) < 2 R$, and since $\supp(x) \cap B_r(G) = \emptyset$, this implies $\supp(g_{m_j} x_{i_j}) \cap B_R(G) = \emptyset$ if $r$ is large enough. Then the local rule of the outermost bracket in $[x, g_{m_1} x_{i_1}, ..., g_{m_k} x_{i_k}]_{1_G}$ has the form $[P, 0^{B_R(G)}] = 0$ for some pattern $P$. Thus $z_{1_G} = 0$ for all $z \in [\{x\}, \Delta_X]$.

For general $x \in X$ with $x|_{B_r(G)} = 0^{B_r(G)}$, if some $z \in [\{x\}, X]$ satisfies $z_{1_G} \neq 0$, then by continuity of the operations and $X = \overline{\Delta_X}$ we also have $z_{1_G} \neq 0$ for some $z \in [\{x\}, \Delta_X]$ and $x \in \Delta_X$.
\end{proof}

The following is essentially a classical result, see e.g.\ \cite[Theorem 1]{Ha54}. It is usually stated for ideals of the group ring $\Z[G]$, we sketch a version of the proof for subgroups of $A^G$ for a finite group $A$.

\begin{lemma}
\label{lem:VirtPoly}
Let $G$ be a virtually polycyclic group, $A$ a finite group and $Y \subset \Delta_{A^G}$ a shift-invariant subgroup of $A^G$. Then $Y$ is finitely orbit-generated. In particular, $\Delta_X$ is finitely orbit-generated for any group shift $X$ over $G$.
\end{lemma}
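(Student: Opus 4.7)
The second claim follows immediately from the first, since for any group shift $X \subset A^G$, $\Delta_X$ is a shift-invariant subgroup of $\Delta_{A^G}$.

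For the first, the plan is to adapt the Hilbert basis argument underlying Hall's Noetherianity theorem to the non-abelian setting of $\Delta_{A^G}$. First I reduce to the case of $G$ polycyclic by passing to a normal polycyclic finite-index subgroup $H \leq G$: any $G$-invariant $Y$ is also $H$-invariant, and finite $H$-orbit generation of $Y$ implies finite $G$-orbit generation. Then I induct on the Hirsch length $h(G)$; the base case $h(G) = 0$ (so $G$ is finite) is trivial. For the inductive step, pick $N \triangleleft G$ with $G/N = \langle t \rangle \cong \Z$ and $h(N) = h(G) - 1$. Writing $G = \bigsqcup_{d \in \Z} t^d N$, each $y \in \Delta_{A^G}$ has a finite degree set $\deg(y) = \{d : \supp(y) \cap t^d N \neq \emptyset\} \subset \Z$. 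To a $G$-invariant $Y$, I associate two $N$-invariant ``leading-coefficient'' subgroups $L^\pm(Y) \subset \Delta_{A^N}$: $L^+(Y)$ consists of the restrictions $y|_{t^d N}$ (viewed in $\Delta_{A^N}$ via the natural identification $A^{t^d N} \cong A^N$) of elements $y \in Y$ with $\deg(y) \subset (-\infty, d]$, and $L^-(Y)$ is defined analogously using $\deg(y) \subset [d, \infty)$; $t$-invariance of $Y$ makes both independent of $d$. By the inductive hypothesis, $L^\pm(Y)$ are finitely $N$-orbit-generated; I lift generators to $y_i^\pm \in Y$ and set $Y' = \langle G \cdot y_i^\pm \rangle$.

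The core step is a reduction of each $y \in Y$, modulo $Y'$, to a bounded-window remainder: by alternately cancelling the top-degree slice of $y$ using shifts of the $y_i^+$ and the bottom-degree slice using shifts of the $y_i^-$, I produce $z \in Y'$ such that $yz$ has degree set contained in an interval of width at most a constant $C$ depending only on the $y_i^\pm$'s; each such cancellation strictly decreases the width $\max \deg - \min \deg$ whenever it exceeds $C$. After a $t$-shift, the remainder lies in $Y \cap \Delta_{A^W}$ for the fixed window $W = \bigsqcup_{0 \leq d < C} t^d N$. An $N$-equivariant bijection $W \to \{0, \ldots, C-1\} \times N$, $t^d m \mapsto (d, \sigma^d(m))$ (with $\sigma$ the conjugation-by-$t$ automorphism of $N$) identifies $\Delta_{A^W}$ with $\Delta_{(A^C)^N}$ as $N$-groups. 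The inductive hypothesis applied to $N$ with alphabet $A^C$ then produces finite $N$-orbit-generators of $Y \cap \Delta_{A^W}$, which together with the $y_i^\pm$ form a finite $G$-orbit-generating set for $Y$.

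The main obstacle is engineering the reduction to terminate. A naive one-sided version using only $L^+$ to push $\max \deg(y)$ downward fails, because each cancellation can drag $\min \deg$ downward by a comparable amount, so the support never actually shrinks. Using the width $\max \deg - \min \deg$ as the decreasing invariant and alternating cancellations at both extremes sidesteps this. The $\sigma$-twist in the identification $\Delta_{A^W} \cong \Delta_{(A^C)^N}$ is essentially cosmetic, as $\sigma$ is an automorphism of $N$ and so preserves the lattice of $N$-invariant subgroups.
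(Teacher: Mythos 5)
Your proof is correct and follows essentially the same route as the paper's: a Hilbert-basis-style induction on Hirsch length using leading-coefficient subgroups of $\Delta_{A^N}$, reduction of an arbitrary $y \in Y$ modulo lifted generators to a bounded window $\bigsqcup_{0 \leq d < C} t^d N$, and a second application of the inductive hypothesis over $N$ with the enlarged alphabet $A^C$. The only differences are cosmetic: the paper runs a one-sided reduction (which, contrary to your remark, does terminate, because cancelling an extreme slice only disturbs degrees within the generators' width $C_0$ of that extreme, so the width $\max\deg - \min\deg$ still strictly decreases until it is at most $C_0$), and in your preliminary reduction you should pass to a \emph{strongly} polycyclic finite-index subgroup, since an infinite polycyclic group such as $\Z \rtimes \Z_2$ need not admit any normal subgroup with quotient $\Z$.
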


\begin{proof}
First, we may assume $G$ is strongly polycyclic, i.e.\ admits a subnormal series $G = G_0 \triangleright G_1 \triangleright \cdots \triangleright G_n = \{1_G\}$ with infinite cyclic factors. This is because $G$ has such a finite-index subgroup and the properties of being a shift-invariant subgroup and being finitely orbit-generated are not affected by moving between $G$ and its finite-index subgroups.
We proceed by induction on the Hirsch length $n$, assuming that $H =: G_1$ has this property.
Let $g \in G$ be such that $G = \bigcup_n g^n H$. 

Define $\Delta^+_{A^G}$ as the set of those $y \in \Delta_{A_G}$ with $\supp(y) \in \bigcup_{n \geq 0} g^n H$. This is an $H$-invariant set since each $g^n H$ is.
Let $Z = \{ y|_H \;|\; y \in Y \cap \Delta^+_{A_G}\}$, which is an $H$-invariant subgroup of $\Delta_{A^H}$. By the inductive assumption there is a finite set $F \subset Y \cap \Delta^+_{A^G}$ such that $\{ y|_H \;|\; y \in F \}$ orbit-generates $Z$.
Let $k \geq 0$ be minimal such that $\supp(y) \subset \bigcup_{i = 0}^{k-1} g^i H$ for all $y \in F$ and denote $K = \{1_G, g, g^2, \ldots, g^{k-1}\}$.

Let $y \in Y$ be arbitrary. We claim that there exists $z \in \langle G F \rangle$ with $\supp(z y) \subset g^n K H$ for some $n \in \Z$. We may assume $y \in \Delta^+_{A^G}$, shifting by some $g^n$ for $n > 0$ if this is not the case, and proceed by induction on the minimal $p \geq 0$ with $\supp(y) \subset \bigcup_{i=0}^{p-1} g^i H$. If $p \leq k$, then $\supp(y) \subset K H$ and we are done. Otherwise, observe that $y|_H \in Z$, so that some $z \in \langle F \rangle$ satisfies $(z y)|_H = 1$. Then $g^{-1} \cdot z y \in \Delta^+_{A^G}$ has a smaller value of $p$, so there exists $x \in \langle G F \rangle$ with $\supp(x (g^{-1} \cdot z y)) \subset g^n K H$ for some $n$, and hence $\supp((g \cdot x) z y) \subset g^{n+1} K H$.

Let $Z' = \{ y \in Y \;|\; \supp(y) \subset K H \}$. This is an $H$-invariant subgroup isomorphic to a subgroup of $\Delta_{B^H}$ for some finite group $B$ with $|B| = |A|^k$. Hence it is orbit-generated by a finite set $F' \subset Z'$, and then $Y$ is orbit-generated by $F \cup F'$.
\end{proof}

%

%

\begin{theorem}
\label{thm:MainIGuess}
Let $G$ be a virtually polycyclic group and suppose $X \subset A^G$ is a Lie algebraic subshift over a finite field $K$ in the category of $G$-subshifts. If the homoclinic points are dense, then the operations of $X$ can be recoded to be cellwise.
\end{theorem}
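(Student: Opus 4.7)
The plan is to combine Theorem~\ref{thm:MainTechnical} with Lemma~\ref{lem:VirtPoly}; this is essentially the only content required, as the two previous results were designed with this corollary in mind. First I would observe that since $X$ is a Lie algebraic subshift, its underlying additive structure makes $(X, +)$ a $G$-group shift (the addition is a block code into $X$ and commutes with shifts). By Lemma~\ref{lem:CellwiseIffInfoCanMove}, I may (after a recoding) assume that $A$ is a finite-dimensional vector space over $K$ and that the vector operations on $X$ are cellwise, so $X$ is in particular an abelian group shift in a traditional sense.

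Next, I would invoke Lemma~\ref{lem:VirtPoly}: since $G$ is virtually polycyclic, $\Delta_X$ -- which is a shift-invariant subgroup of $(X, +)$ lying inside $\Delta_{A^G}$ -- is finitely orbit-generated as a subgroup of $(X, +)$. The hypothesis that homoclinic points are dense in $X$ then reads exactly as $X = \overline{\Delta_X}$. Thus both assumptions of Theorem~\ref{thm:MainTechnical} are satisfied, and that theorem gives a recoding of the Lie algebra operations (scalar multiplications, addition, and bracket) into cellwise operations, which is the desired conclusion.

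There is no genuine obstacle here; the only thing to verify carefully is that the Lie algebra structure does not impede the applicability of Lemma~\ref{lem:VirtPoly}. That lemma only needs $\Delta_X$ to be a shift-invariant subgroup of $A^G$, which is automatic from the additive Lie algebra axioms, so we do not need to use the bracket at all at this stage. The bracket enters through Theorem~\ref{thm:MainTechnical}, whose proof already uses bilinearity and Jacobi (via Lemma~\ref{lem:NeedOnlyNonCommu}) to control affine maps of the Lie structure. Hence the proof reduces to a two-line deduction citing these two results.
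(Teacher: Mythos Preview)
Your proposal is correct and matches the paper's approach exactly: the paper's proof is the single line ``Direct from Lemma~\ref{lem:VirtPoly} and Theorem~\ref{thm:MainTechnical}.'' Your additional remarks about first cellwising the vector operations and verifying that $\Delta_X$ is a shift-invariant subgroup are already absorbed into the proof of Theorem~\ref{thm:MainTechnical}, so they are harmless but not needed here.
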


\begin{proof}
Direct from Lemma~\ref{lem:VirtPoly} and Theorem~\ref{thm:MainTechnical}.
\end{proof}

We do not know whether virtual polycyclicity is needed in the theorem.

\begin{question}
Are Lie algebraic subshifts with dense homoclinic points cellwiseable, on every group?
\end{question}

\subsection{Brackets compatible with a particular vector shift}
\label{sec:Compatibles}

After cellwiseability of the bracket, an interesting question is to try to understand what the bracket can actually do. We show some preliminary results. We concentrate on full vector shifts $V^G$. When $V$ has dimension at least $2$, there are always infinitely many compatible brackets:

\begin{proposition}
Let $V$ be a $d$-dimensional vector space over a finite field $K$, with $d \geq 2$. If $G$ is an infinite f.g. group then the full vector shift $V^G$ admits infinitely many compatible Lie brackets.
\end{proposition}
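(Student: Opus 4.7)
The plan is to exhibit, for every non-identity element $g \in G$, a compatible Lie bracket $[\cdot,\cdot]^g$ on $V^G$ and then verify that distinct $g$ give distinct brackets; since $G$ is infinite, this immediately yields infinitely many. The key trick is to arrange each bracket so its image lies in a fixed one-dimensional subspace annihilated by a chosen linear functional, which makes the algebra $2$-step nilpotent and renders the Jacobi identity automatic.

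Using $d \geq 2$, I would pick a nonzero vector $v_0 \in V$ together with a nonzero linear functional $\lambda : V \to K$ satisfying $\lambda(v_0) = 0$, and for each $g \in G \setminus \{1_G\}$ define
\[ [x, y]^g_k = \bigl( \lambda(x_{kg}) \lambda(y_k) - \lambda(y_{kg}) \lambda(x_k) \bigr)\, v_0 \]
for $x, y \in V^G$ and $k \in G$. Bilinearity in each argument over $K$ is immediate, reflexivity holds because the two summands coincide when $x = y$, and the expression is a block code with neighborhood $\{1_G, g\}$ that is $G$-equivariant by associativity of the group operation. For the Jacobi identity, the observation is that every value $[x,y]^g_k$ is a scalar multiple of $v_0$, so $\lambda$ annihilates $[x,y]^g$ at every coordinate; feeding such a configuration into either slot of the defining formula then yields zero, so every iterated bracket $[[x,y]^g, z]^g$ vanishes and the algebra is $2$-step nilpotent. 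The Jacobi identity therefore reduces to $0 + 0 + 0 = 0$.

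To separate the brackets, I would fix $e \in V$ with $\lambda(e) = 1$ and, given distinct $g, g' \in G \setminus \{1_G\}$, test on the configurations $x, y \in V^G$ with $x_{1_G} = e$ and $y_g = e$ (all other entries zero). A direct computation gives $[x,y]^g_{1_G} = -v_0 \neq 0$, while $[x,y]^{g'}_{1_G} = 0$ since $x_{g'} = y_{g'} = 0$. Hence $[\cdot,\cdot]^g \neq [\cdot,\cdot]^{g'}$, and we obtain infinitely many distinct compatible brackets. The only non-mechanical step is producing a Jacobi-satisfying bracket in the first place; the trick of forcing the image into $\ker \lambda$ bypasses any honest Jacobi computation and is the one modest piece of ingenuity the argument really needs.
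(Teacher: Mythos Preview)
Your argument is correct. Each $[\cdot,\cdot]^g$ is visibly a bilinear, alternating, shift-commuting block code with neighborhood $\{1_G,g\}$, and your observation that the image lies in $K v_0 \subset \ker\lambda$ forces every nested bracket to vanish, so Jacobi is trivial. The separating test with $x = e\cdot\chi_{\{1_G\}}$ and $y = e\cdot\chi_{\{g\}}$ works exactly as you say.

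The paper takes a slightly different route. It first fixes a single cellwise bracket on $V$ (namely $[e_1,e_2]=e_1$, all other generator pairs mapped to $0$) and then produces the family by conjugating this one bracket by the partial-shift automorphisms $f_g$ of the vector shift $V^G$; distinctness is then read off from the conjugated formulas. So the paper's brackets are all isomorphic as abstract Lie algebras (to the non-nilpotent $2$-dimensional solvable algebra, extended trivially), and the point is that conjugation by $f_g$ moves them around inside $V^G$. Your brackets, by contrast, are $2$-step nilpotent by design and are written down directly rather than obtained by conjugation. Your approach is a bit more self-contained---the nilpotency trick dispatches Jacobi without any computation---while the paper's conjugation viewpoint makes explicit the general principle that the linear automorphism group of $V^G$ acts on the set of compatible brackets, which it then exploits again in the subsequent discussion.
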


\begin{proof}
If $[] : V^2 \to V$ is any compatible Lie bracket, then $\llbracket x, y \rrbracket = f^{-1}([f(x), f(y)])$ is a compatible Lie bracket for any linear automorphism of $V^G$ (i.e.\ automorphism of the subshift and the vector space structures), by a direct calculation. Identify $V$ with $K^d$ and then $V^G$ with $(K^G)^d$. On $V$, define a bracket by $[(1, 0,0, ..., 0), (0, 1,0,0, ..., 0)] = (1,0,0,...,0)$ and extend by bilinearity and reflexivity and mapping everything else to zero. This is easily seen to define a Lie bracket on $V$: by bilinearity and reflexivity, it is enough to check the Jacobi identity on triples of distinct generators that do not commute with all other generators, and in fact there are no such triples. This extends to $V^G$ by cellwise operations.

Now, pick $f_g$ to be the partial shift
\[ f_g(x_1, x_2, ..., x_d)_h = ((x_1)_{hg}, (x_2)_h, ..., (x_d)_h), \]
which is obviously an automorphism. Then it is easy to show that the brackets $\llbracket x, y \rrbracket = f_g^{-1}([f_g(x), f_g(y)])$ are distinct for each $g \in G$.
\end{proof}

The linear automorphism groups of $(K^d)^G$ can be quite complex once $d \geq 2$. Already if $K = \Z_2$ (the two-element field), $G = \Z$ and $d = 2$, the automorphism group of the vector shift $(K^d)^G$ is a finitely-generated group containing a two-generator free group and a copy of the lamplighter group $\Z_2 \wr \Z$. (If $d = 1$, the linear automorphisms are easily seen to be shift maps.)

One may ask whether the proposition essentially produces all Lie algebra structures. We show that the construction above does not give all brackets, indeed there can exist infinitely many non-isomorphic Lie algebraic structures with the same underlying vector shifts:

\vspace*{0.2cm}

\begin{example}
\label{ex:UpToIsomorphism}
Consider a f.g.\ group $G$, let $V = K^3$ and identify $(K^3)^G$ with $(K^G)^3$ so configurations of $V^G$ are $3$-tuples $(x, y, z)$ over $K^G$. Write $e_1$ for $(\chi_{\{1_G\}}, 0^G, 0^G)$ where $\chi_B \in \{0,1\}^G$ is the characteristic function of $B$ and write $e_2$ for the corresponding (topological) generator for the second track (by \emph{tracks}, we refer to components of a Cartesian product). Now define a bracket by
\[ [e_1, e_2] = z \]
for an arbitrary (but fixed) $z \in \{0^G\} \times \{0^G\} \times \Delta_{K^G}$. Extend this using shift-commutation, continuity, reflexivity and bilinearity, mapping everything else to $0$. 
Bilinearity and reflexivity are then obvious. The Jacobi identity trivially holds because the bracket satisfies $[a,b,c] = 0$ for all $a, b, c \in (K^G)^3$.

Now consider this construction when $G = \Z$, and for simplicity, let us do concrete calculations with $|K| = 2$. Let $X = (K^3)^\Z$ and for all $i$ construct a Lie bracket $[]_i$ on $X$ according to the above construction, with the choice $z^i = \chi_{\{0, i\}}$ for every $i \in \Z$. It is easy to see that the brackets $[]_i$ and $[]_{-i}$ are conjugate by an automorphism of the vector shift structure of $X$ (conjugate by the partial shift by $i$ in the third component). We claim the Lie algebraic subshifts $(X, []_i)$ and $(X, []_j)$ are not isomorphic when $0 \leq i < j$. In fact, even if we ignore the vector space structure, the algebras $(X, []_i, 0)$ where we consider only the algebra operation $[]_i$ and the zero element $0 \in X^i$ as structure, are not isomorphic in the category of subshifts.

To see this, let $p_{i,n}$ the number of $n$-periodic points (points with orbit size divisible by $p$) of $X^2$  which that map to $0$ in the operation $[]_i$, where $i \geq 0$ and $n \geq 1$. The function $n \mapsto p_{i,n}$ is clearly an invariant for $(X, []_i, 0)$. Since the number of words $abcd \in \{0,1\}^4$ such that $[ae_0 + be_0', ce_0 + de_0']$ contributes $z$ on the third track is $6$, and $24 = 6 \cdot 4$, it is easy to calculate
\begin{align*}
p_{0,n} &= 24^n \\
p_{1,n} &= 24^n + 40^n \\
p_{2,2n} &= (24^n + 40^n)^2, \; p_{2,2n+1} = 24^{2n+1} + 40^{2n+1} \\
p_{3,3n} &= (24^n + 40^n)^3, \; p_{3,3n+1} = 24^{3n+1} + 40^{3n+1}, \; p_{3,3n+2} = 24^{3n+2} + 40^{3n+2} \\
\end{align*}
and in general
\[ p_{k,n} = (24^m + 40^m)^{n/m} \mbox{ where } m = n/\gcd(k,n). \]
The functions $n \mapsto p_{k,n}$ are pairwise distinct for distinct values of $k$. Indeed, if $p^h \mid k_1$ and $p^h \nmid k_2$, let so $p^k \mid k_2$ with $k$ maximal. Picking $n = p^h$ we have $m_1 = n/\gcd(k_1, n) = 1$, $m_2 = n/\gcd(k_2, n) = p^{h-k}$ and then
\[ p_{k_1,n} = (24 + 40)^{p^h} = ((24 + 40)^{p^{h-k}})^{p^k} > (24^{p^{h-k}} + 40^{p^{h-k}})^{p^k} = p_{k_2,n}. \]
Clearly $n \mapsto p_{k,n}$ is an isomorphism invariant for $(X, []_i, 0)$ (even without the vector space structure), so $(X, []_i, 0)$ and $(X, []_j, 0)$ are not isomorphic if $i \neq j$. \qee
\end{example}

We leave open three classification problems, in increasing order of generality (and presumably difficulty).

\begin{conjecture}
Let $X = K^\Z$ be the full vector shift, where $K$ is a finite field as a vector space over itself. Then $X$ is not compatible with a nontrivial (i.e.\ noncommutative) Lie algebraic subshift structure.
\end{conjecture}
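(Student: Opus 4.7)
My plan would be to encode any hypothetical compatible bracket as a two-variable Laurent polynomial, translate the Lie axioms into a polynomial identity, and then show by a leading-term argument that the polynomial must vanish.

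First, by locality and $K$-bilinearity of its local rule, any shift-commuting $K$-bilinear block map $[\cdot,\cdot] \colon K^{\Z} \times K^{\Z} \to K^{\Z}$ is determined by finitely many scalars $a_{i,j} \in K$ via $[x,y]_{n} = \sum_{i,j} a_{i,j} x_{n+i} y_{n+j}$. I encode the bracket in the Laurent polynomial $P(s,t) = \sum_{i,j} a_{i,j} s^{i} t^{j} \in K[s^{\pm 1}, t^{\pm 1}]$. Anticommutativity (a consequence of bilinearity and reflexivity) corresponds to $P(s,t) = -P(t,s)$. Computing the three trilinear maps appearing in the Jacobi identity and collecting coefficients shows that the Jacobi identity is equivalent to the polynomial identity
\[ P(s,t)\,P(st,u) + P(t,u)\,P(tu,s) + P(u,s)\,P(us,t) = 0 \qquad (\ast) \]
in the integral domain $K[s^{\pm 1}, t^{\pm 1}, u^{\pm 1}]$. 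The conjecture then reduces to showing that $(\ast)$ together with antisymmetry forces $P = 0$.

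The key observation is that the three summands of $(\ast)$ have asymmetric behavior in $s$: if $d = \deg_{s} P$ then both $P(s,t) P(st,u)$ and $P(u,s) P(us,t)$ have $s$-degree up to $2d$, while the middle summand $P(t,u) P(tu,s)$ has $s$-degree only $d$. Assuming $P \neq 0$ and $d \geq 1$, let $P_{d}(t) = \sum_{j} a_{d,j} t^{j}$, which is nonzero by the choice of $d$. Extracting the coefficient of $s^{2d}$ from $(\ast)$, the middle summand contributes zero; the first summand contributes $t^{d} P_{d}(t) P_{d}(u)$ (only the term $i=k=d$ in the expansion attains the top $s$-degree); and the third contributes $-u^{d} P_{d}(u) P_{d}(t)$ after applying the antisymmetry $a_{i,d} = -a_{d,i}$. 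The sum is $P_{d}(t)\, P_{d}(u)\,(t^{d} - u^{d})$, which must vanish. Since $K[t^{\pm 1}, u^{\pm 1}]$ is an integral domain and $t^{d} - u^{d} \neq 0$ (replaced by $t^{d} + u^{d} \neq 0$ in characteristic two) for $d \geq 1$, this forces $P_{d} \equiv 0$, contradicting the definition of $d$.

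If instead the maximum $s$-exponent of $P$ is non-positive, I apply the symmetric argument to the minimum $s$-exponent $d' \leq 0$: when $d' \leq -1$ the $s^{2d'}$-coefficient of $(\ast)$ factors analogously as $P_{d'}(t) P_{d'}(u)\,(t^{d'} - u^{d'}) = 0$, yielding $P_{d'} \equiv 0$ and a contradiction. The only remaining possibility is $d = d' = 0$, in which case $P$ is supported only at $(0,0)$ and reflexivity gives $a_{0,0} = 0$, so $P = 0$. I expect the delicate step to be the bookkeeping needed to verify that the $s^{2d}$-contributions to $(\ast)$ come exactly from the claimed terms; after that the contradiction follows immediately from integral-domain arguments.
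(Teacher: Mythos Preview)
The statement you are addressing is presented in the paper as an open \emph{conjecture}, not a theorem; the paper offers no proof and instead lists it among three classification problems explicitly left open. Your argument, by contrast, appears to resolve the conjecture affirmatively, so there is no ``paper's own proof'' to compare against.

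Your proof is correct. The encoding of a shift-commuting $K$-bilinear block map by a Laurent polynomial $P(s,t)$ is valid because the local rule of such a map is a $K$-bilinear form on $K^{N}\times K^{N}$ and hence is given by a finite matrix $(a_{i,j})$. The translation of the Jacobi identity into the polynomial equation $(\ast)$ is correct: a continuous shift-commuting trilinear form on $K^{\Z}$ vanishes identically iff all its coefficients vanish (test on indicator configurations $e_p,e_q,e_r$), so $(\ast)$ holds as an identity in $K[s^{\pm1},t^{\pm1},u^{\pm1}]$. The one point worth making more explicit is why reflexivity gives both $a_{i,j}=-a_{j,i}$ and $a_{i,i}=0$ over \emph{every} finite field, including $\mathbb F_2$: the quadratic function $\sum a_{i,j}x_ix_j$ vanishes on all of $K^{N}$ iff the diagonal coefficients and the symmetrized off-diagonal coefficients all vanish, as one sees by substituting $e_i$ and $e_i+e_j$. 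With antisymmetry in hand, your computation of the $s^{2d}$-coefficient of $(\ast)$ as $P_d(t)P_d(u)(t^{d}-u^{d})$ is exactly right, and since $t^{d}-u^{d}\neq 0$ for $d\geq 1$ (also in characteristic $2$, where it reads $t^{d}+u^{d}$), the integral-domain argument forces $P_d=0$, a contradiction. The symmetric treatment of the minimal $s$-exponent $d'\leq -1$ and the residual case $d=d'=0$ (where $P=a_{0,0}$ and reflexivity gives $a_{0,0}=0$) complete the argument.

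In short: the paper has no proof, and your argument settles the conjecture.
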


\begin{problem}
Classify Lie algebraic subshift structures consistent with the cellwise vector shift structure of $V^\Z$, for $V$ a finite-dimensional vector space over a finite field $K$, up to automorphisms of $V^\Z$.
\end{problem}

\begin{problem}
Let $G$ be a f.g.\ group. Classify Lie algebraic subshift structures consistent with the cellwise vector shift structure of $V^G$, for $V$ a finite-dimensional vector space over a finite field $K$, up to automorphisms of $V^G$.
\end{problem}

Note that the previous problem includes all vector shifts over all f.g. groups also in the general sense of internal vector spaces, since vector spaces are a shallow variety.

\section{On the failure of cellwiseability}

\subsection{Nontrivial co-homoclinic factor}

If the co-homoclinic factor is nontrivial, Theorem~\ref{thm:MainIGuess} does not necessarily hold, even if $G = \Z$. A \emph{linear cellular automaton} on a vector shift is a cellular automaton (shift-invariant continuous self-map) which is linear.

\begin{theorem}
\label{thm:NotCellwise}
Suppose the full vector shift $(K^d)^G$ over a finite field $K$ admits a linear cellular automaton of infinite order. Then $G$ admits a Lie algebraic subshift with co-homoclinic factor the additive group of $K$.
\end{theorem}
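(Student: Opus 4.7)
The plan is to build $X$ as a direct sum of the full vector shift with the subshift of constant configurations, equipped with a Lie bracket that packages the given CA $T$ into an operation whose image lies entirely in the homoclinic part. Write $T : (K^d)^G \to (K^d)^G$ for the given linear CA of infinite order and let $C = \{c \cdot \chi_G \;|\; c \in K\} \subset K^G$ be the closed shift-invariant subshift of constant configurations, so $C \cong (K,+)$ as an abelian group. Put
\[
X := (K^d)^G \oplus C \;\subset\; (K^{d+1})^G,
\]
which is a closed shift-invariant subset and hence a $G$-subshift; give $X$ the cellwise vector space operations inherited from $(K^{d+1})^G$. Writing $a = (u, c\chi_G)$ with $u \in (K^d)^G$, $c \in K$, I define
\[
\bigl[(u, c \chi_G),\, (u', c' \chi_G)\bigr] := \bigl(c\, T(u') - c'\, T(u),\; 0\bigr).
\]
Shift-equivariance follows from that of $T$, and a local rule on any ball can be read off directly from the local rule of $T$, so this is a block code $X \times X \to X$.

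I then verify the Lie axioms. Bilinearity is immediate from linearity of $T$, and reflexivity $[a,a] = (cT(u)-cT(u),0) = 0$ holds by construction. For Jacobi, the key observation is that any bracket in $X$ has second component $0$, so for $a = (u_1, c_1\chi_G)$, $b = (u_2, c_2\chi_G)$, $c = (u_3, c_3\chi_G)$ we get
\[
[[a,b],c] = \bigl(-c_3\, T\bigl(c_1 T(u_2) - c_2 T(u_1)\bigr),\; 0\bigr) = \bigl(-c_1 c_3 T^2(u_2) + c_2 c_3 T^2(u_1),\; 0\bigr),
\]
and the two cyclic permutations give the remaining four of the six terms $\pm c_i c_j T^2(u_k)$, which cancel in pairs.

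Next I compute the co-homoclinic factor. The existence of a linear CA of infinite order forces $G$ to be infinite (as $(K^d)^G$ is otherwise finite), so a configuration $(u, c \chi_G) \in X$ is homoclinic iff $c = 0$ and $u$ has finite support. Thus $\Delta_X$ equals the finitely supported configurations in $(K^d)^G$ crossed with $\{0\}$, and by density of finitely supported configurations its closure is $\overline{\Delta_X} = (K^d)^G \oplus \{0\}$. Hence $X/\overline{\Delta_X} \cong C \cong (K,+)$, as required.

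The only mildly nontrivial step is the Jacobi identity, and even that reduces to a one-line computation because the inner bracket always lands in the ``first-component-only'' part, making the outer bracket use only a single application of $T$; the hard part is really just choosing the right construction. I would also remark in passing that although the infinite order of $T$ is not formally needed to achieve the co-homoclinic factor $(K,+)$, it is what makes this example non-cellwiseable: the affine map $\xi \mapsto [(0,\chi_G),\xi]$ acts as $T$ on the first coordinate, so its iterates are affine maps reproducing $T^k$; since $K$ is finite there are only finitely many block maps of each radius, so $T$ of infinite order forces the radii of $T^k$ to be unbounded, and Lemma~\ref{lem:CellwiseIffInfoCanMove} applies.
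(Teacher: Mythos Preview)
Your proof is correct and follows essentially the same construction as the paper: the same subshift $X=(K^d)^G\times\{a^G:a\in K\}$ with cellwise vector operations, the same bracket (yours is the negative of the paper's, which is immaterial), and the same argument that iterated bracketing with $(0,\chi_G)$ reproduces the powers $T^k$ and hence yields affine maps of unbounded radius. Your direct computation of $\overline{\Delta_X}$ is actually cleaner than the paper's appeal to SFT/mean TMP, and your pigeonhole remark that finitely many block maps of each radius forces unbounded radii for $T^k$ is a nice explicit justification of a step the paper leaves implicit.
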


\begin{proof}
Let $X = (K^d)^G \times \{a^G \;|\; a \in K\}$, and observe that $X$ is a vector shift with the cellwise vector space operations of $K^{d+1}$. Observe that the co-himoclinic factor is indeed isomorphic to the additive group of $K$, as $\overline{\Delta_X} = (K^d)^G \times \{0^G\}$ and the subshift is an SFT, thus has mean TMP. Write $e_g = \chi_{\{g\}}$. Let $f : (K^d)^G \to (K^d)^G$ be a linear cellular automaton of infinite order. To simplify notation, identify $0$ with $(0^d)^G$ on left sides of tuples, and $a \in K$ with $a^G$ on right sides. 

Let $[(x, 0), (0, a)] = (af(x), 0)$ and extend this using shift-commutation, reflexivity and bilinearity, mapping everything else to $0$. In other words, set
\begin{align*}
[(x, a), (y, b)] &= [(x, 0), (y, b)] + [(0, a), (y, b)] \\
&= [(x, 0), (y, 0)] + [(x, 0), (0, b)] + [(0, a), (y, 0)] +  [(0, a), (0, b)] \\
&= (bf(x), 0) - (af(y), 0).
\end{align*}
Not surprisingly, bilinearity and reflexivity hold. For the Jacobi identity, observe that in the presence of the other axioms, it is enough to verify it for a set of generators of the vector shift $(K^d)^G$. This reduces the problem to the calculations
\begin{align*}
&[(x, 0), (0, a), (0, b)] + [(0, a), (0, b), (x, 0)] + [(0, b), (x, 0), (0, a)] \\
= \; &[(af(x), 0), (0, b)] + 0 - [(bf(x), 0), (0, a)] \\
= \; &(baf(x), 0) - (abf(x), 0) = 0
\end{align*}
and
\begin{align*}
&[(x, 0), (y, 0), (0, b)] + [(y, 0), (0, b), (x, 0)] + [(0, b), (x, 0), (y, 0)] \\
= \; &0 + [(bf(y), 0), (x, 0)] - [(bf(x), 0), (y, 0)] = 0.
\end{align*}

The affine maps $\phi_i$ defined inductively by $\phi_0(\xi) = \xi$, $\phi_{i+1}(\xi) = [\phi_i(\xi), (0, 1)]$ do not have bounded radius, in fact $\phi_i((x, 0)) = (f^i(x), 0)$ and since $f$ has infinite order as a cellular automaton, the radii of the maps $f^i$ are not uniformly bounded.
\end{proof}

\begin{corollary}
\label{cor:NotCellwise}
Suppose $G$ is a finitely-generated group that is not torsion, i.e.\ some $g \in G$ has infinite order. Then for any finite field $K$ there is a Lie algebraic $G$-subshift over $K$ which is not cellwiseable and has co-homoclinic factor $K$.
\end{corollary}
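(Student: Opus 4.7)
The plan is to reduce directly to Theorem~\ref{thm:NotCellwise} by exhibiting, for any non-torsion f.g.\ group $G$ and any finite field $K$, a linear cellular automaton of infinite order on the one-dimensional full vector shift $K^G$ (so one can take $d = 1$). Fixing $g \in G$ of infinite order, I would take this CA to be the right-translation $\rho_g : K^G \to K^G$ given by $\rho_g(x)_h = x_{hg}$.

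The only nontrivial verification is that $\rho_g$ qualifies as a CA under the paper's left-action convention $(kx)_h = x_{k^{-1}h}$. Continuity and $K$-linearity are immediate since $\rho_g$ merely permutes coordinates; $G$-equivariance is a one-line associativity check,
\[ (k \cdot \rho_g(x))_h = \rho_g(x)_{k^{-1}h} = x_{k^{-1}hg} = (kx)_{hg} = \rho_g(kx)_h. \]
An easy induction gives $\rho_g^n = \rho_{g^n}$, and the radius of $\rho_{g^n}$ as a CA (with respect to a fixed finite generating set of $G$) is exactly the word length $|g^n|$. Since balls in a finitely generated group are finite, the sequence $g, g^2, g^3, \ldots$ of pairwise distinct elements has unbounded word lengths; in particular $\rho_g^n \neq \ID$ for every $n \geq 1$, so $\rho_g$ has infinite order.

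Applying Theorem~\ref{thm:NotCellwise} with $f = \rho_g$ and $d = 1$ then yields a Lie algebraic $G$-subshift $X \subset (K^2)^G$ whose co-homoclinic factor is the additive group of $K$. Non-cellwiseability is immediate from Lemma~\ref{lem:CellwiseIffInfoCanMove} combined with the final observation inside the proof of Theorem~\ref{thm:NotCellwise}: the affine maps $\phi_i$ constructed there satisfy $\phi_i((x,0)) = (\rho_{g^i}(x), 0)$, so their radii are at least $|g^i|$ and hence unbounded in $i$. There is no real obstacle beyond keeping the left/right conventions straight when verifying that $\rho_g$ (rather than left translation) is the map that commutes with the ambient $G$-action.
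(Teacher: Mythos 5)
Your proposal is correct and follows exactly the paper's own (one-line) proof: take the right translation $f(x)_h = x_{hg}$ for an infinite-order element $g$, note it is a linear CA of infinite order, and feed it into Theorem~\ref{thm:NotCellwise}. The extra verifications you supply (equivariance under the left shift action, $\rho_g^n = \rho_{g^n}$, unboundedness of the radii $|g^n|$) are all accurate and simply make explicit what the paper leaves implicit.
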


\begin{proof}
The right shift $f(x)_h = x_{hg}$ is linear (when well-defined on an algebraic subshift, for any cellwise algebraic structure) and has infinite order.
\end{proof}

\begin{example}
On $\Z$ consider $X = \{0,1\}^\Z \times \{0^\Z,1^\Z\}$ with componentwise and cellwise vector operations induced from those of $\Z_2$. Write $e_i = \chi_{\{i\}} \times \{0^\Z\}$ where $\chi_B \in \{0,1\}^\Z$ is the characteristic function of $B$, and write $a = a^\Z$ for $a \in \{0,1\}$. Pick any finite-support configuration $y \in \{0,1\}^\Z$. Define $[e_0, (0,1)] = y$, extend by shift-commutation, reflexivity and bilinearity and set all other images to $(0, 0)$. Now $x_0 = e_0$, $x_{i+1} = [x_i, (0,1)]$ has $0$ on its second track for all $i$, and on the first track it has the $i$th image of $e_0$ in the linear cellular automaton defined by $f(e_0) = y$. For all but two choices of $y$, this orbit is infinite, and thus the Lie bracket cannot be recoded to be cellwise. \qee
\end{example}

\subsection{Which groups admit infinite-order linear CA?}

We do not know which groups admit linear cellular automata of infinite order:

\begin{question}
\label{q:ExistsLCA}
Let $G$ be a f.g. group and $V$ a finite-dimensional vector space over a finite field $K$. Does $V^G$ necessarily admit a linear cellular automaton of infinite order?
\end{question}

If the answer to Question~\ref{q:ExistsLCA} is positive for every group, then every f.g.\ group admits a non-cellwiseable Lie algebraic subshift. If $G$ has a non-torsion element $g$, then the right shift $f_g$ defined by $f_g(x)_h = x_{hg}$ is linear and of infinite order, so the question is only interesting for torsion groups. Of course, every f.g. group $G$ admits an infinite f.g. group of linear cellular automata on $V^G$ (these right shifts), but the bracket construction in the proof of Theorem~\ref{thm:NotCellwise} only extends directly to commutative linear cellular automata actions.

Linear cellular automata can be seen as elements of the group ring $K[G]$. Recall Kaplansky's conjectures on group rings of torsion-free groups $G$ over a field $K$: the unit conjecture states that $K[G]$ has no units other than $k g$ for $k \in K^{\times}$ and $g \in G$; the zero-divisor conjecture states that $K[G]$ has no zero divisors other than $0$; the idempotent conjecture states that $K[G]$ has no idempotents other than $0$ and $1$. See~\cite{Ba14} for an overview of these. As a variant of Kaplansky's conjectures, one can conjecture that $K[G]$ contains no element $p$ with $p^n = p^m$ for any $0 \leq n < m$. This implies the idempotent conjecture and is implied by the unit conjecture. Question~\ref{q:ExistsLCA} is, in some sense, the dual of this conjecture on torsion groups, since in the case $V = K$ it asks whether the condition holds for \emph{all} $p \in K[G]$. We do not know the answer to Question~\ref{q:ExistsLCA} in general, but for the Grigorchuk group it is positive.

\begin{example}
Let $G = \langle a, b, c, d \rangle$ be the first Grigorchuk group and $K$ a finite field.
We show that $K[G]$ has an element of infinite order.

Take $p = a d a + d a d + c \in K[G]$ and denote $T = \{a d a, d a d, c\} \subset G$.
We claim $p$ has infinite multiplicative order.
For this, recall the form of the Schreier graphs of the Grigorchuk group for its natural action on $\{0,1\}^\N$, with the commonly used generators $a, b, c, d$.
The Schreier graph $S$ defined by the action of $G$ on the orbit of $111...$ is the following infinite $4$-regular multigraph $S$ (see e.g. \cite{Bo15}):
\begin{center}
\begin{tikzpicture}[
place/.style={circle,draw,thick,fill=black!20,
             inner sep=0pt,minimum size=1.5mm}]
\foreach \i in {0,1,...,4} {
	\node[place] (m\i) at (2*\i,0) {};
	\node[place] (n\i) at (2*\i+1,0) {};
}
\foreach \i/\l in {0,1,2,3,4} {
 	\draw (m\i) -- node [above] {$a$} (n\i);
}
\draw (m0) edge [loop above] node [above] {$b$} ();
\foreach \i/\j/\l/\u/\d in {0/1/d/b/c,1/2/b/c/d,2/3/d/b/c,3/4/c/b/d} {
	\draw (n\i) to[bend right=30] node [below] {$\d$} (m\j);
	\draw (n\i) to[bend left=30] node [above] {$\u$} (m\j);
 	\draw (n\i) edge [loop above] node [above] {$\l$} ();
 	\draw (m\j) edge [loop above] node [above] {$\l$} ();
}
\draw (n4) edge [loop above] node [above] {$d$} ();
\node (a) at (10,0) {$\cdots$};
	\draw (n4) to[bend right=30] node [below] {$c$} (a);
	\draw (n4) to[bend left=30] node [above] {$b$} (a);
\draw (m0) edge [loop left] node [left] {$c$} ();
\draw (m0) edge [loop below] node [below] {$d$} ();
\end{tikzpicture}
\end{center}
We have a group action $V(S) \curvearrowleft G$ of $G$ on the vertices of $S$ which just follows the edge with the respective label.

Apart from the two leftmost ones, the vertices of $S$ can be partitioned into induced subgraphs that are isomorphic to one of the following, connected at their endpoints by double edges with labels $b$ and $c$:
\begin{center}
\begin{tikzpicture}[
place/.style={circle,draw,thick,fill=black!20,
             inner sep=0pt,minimum size=1.5mm}]

\foreach \i in {0,1} {
	\node[place] (n\i) at (\i,0) {};
	\draw (n\i) edge [loop above] node [above] {$d$} ();
}
\draw (n0) -- node [above] {$a$} (n1);
\node [left] at (n0) {$S_1 = {}$};

\foreach \i/\l in {0/d,1/b,2/b,3/d} {
	\node[place] (k\i) at (\i+2.8,0) {};
	\draw (k\i) edge [loop above] node [above] {$\l$} ();
}
\draw (k0) -- node [above] {$a$} (k1);
\draw (k2) -- node [above] {$a$} (k3);
\draw (k1) to[bend right=30] node [below] {$d$} (k2);
\draw (k1) to[bend left=30] node [above] {$c$} (k2);
\node [left] at (k0) {$S_2 = {}$};

\foreach \i/\l in {0/d,1/c,2/c,3/d} {
	\node[place] (h\i) at (\i+7.6,0) {};
	\draw (h\i) edge [loop above] node [above] {$\l$} ();
}
\draw (h0) -- node [above] {$a$} (h1);
\draw (h2) -- node [above] {$a$} (h3);
\draw (h1) to[bend right=30] node [below] {$d$} (h2);
\draw (h1) to[bend left=30] node [above] {$b$} (h2);
\node [left] at (h0) {$S_3 = {}$};

\end{tikzpicture}
\end{center}
Take a geodesic path $\gamma_v$ of some length $n_v$ along the generator set $T$ to some vertex $v$ that is the rightmost vertex of a copy of some $S_i$.
We claim that $\gamma_v$ is the unique geodesic path to $v$ in $S$ along $T$.
The path must begin with $d a d, c$ in order to reach the leftmost copy of $S_2$.
After this, the only way to cross from one $S_i$ to the next $S_j$ is to use $c$, the only way to cross an $S_1$ is $d a d$, and the unique shortest way to cross an $S_2$ or $S_3$ is $a d a$.
Since $G$ acts on $S$, every geodesic path to the element $g_v \in G$ represented by $\gamma_v$ must form a geodesic path to $v$ in $S$ as well, hence $\gamma_v$ is also the unique geodesic path in $G$ to $g_v$ along $T$.
The coefficient of $p^{n_v}$ at $g \in G$ is precisely the number of geodesics to $g$ in $G$ along $T$ modulo $\mathrm{char}(K)$, hence for $g_v$ it equals $1_K$.
Since we also have $n_v \neq n_w$ -- and thus $g_v \neq g_w$ -- for $v \neq w$, it follows that $p$ has infinite multiplicative order in $K[G]$.

A similar proof shows that $p = a + b + c + d$ has infinite multiplicative order whenever $q = \mathrm{char}(K) \neq 2$: the number of geodesics to the vertex at distance $2 n$ from the left end of $S$ is $2^n$, which is not divisible by $q$, so at least one of the respective elements of $G$ is reached by a number of geodesics that is also not divisible by $q$.
The same strategy cannot be used when $q = 2$, but we conjecture that $p$ has infinite order in this case as well.

A similar analysis goes through for all the Grigorchuk groups $G_\omega$, in each case using one of the polynomials $p \in \{a d a + d a d + c, a c a + c a c + b, a b a + b a b + d\}$.
\qee
\end{example}

\subsection{Infinitely orbit-generated homoclinics}

If $G$ is f.g. and such that for every group shift (or even just vector shift) on $X$ has finitely orbit-generated $\Delta_X$, then all Lie algebraic subshifts on $G$ with trivial co-homoclinic factor are cellwiseable by Theorem~\ref{thm:MainTechnical}. We do not know which groups $G$ have this property. Certainly not all groups do:

\begin{proposition}
\label{prop:NotFOG}
On $\oplus_\N \Z_2$ and $\Z_2 \wr \Z$ there exist vector shifts $X$ over the two-element field, such that $\Delta_X$ is not finitely orbit-generated.
\end{proposition}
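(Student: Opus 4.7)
The plan has two parts: construct a non-finitely orbit-generated example directly on $G = \bigoplus_\N \Z_2$, then derive one on $\Z_2 \wr \Z$ by acting independently on cosets. Recall that ``finitely orbit-generated'' for the $\Z_2$-vector space $\Delta_X$ is equivalent to being finitely generated as a $\Z_2[G]$-module, so the task is to produce closed (in the subspace topology inherited from $A^G$) non-finitely-generated $\Z_2[G]$-submodules that arise as $\Delta_X$.

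For $G = \bigoplus_\N \Z_2$, I would identify the group ring $R := \Z_2[G]$ with the exterior algebra $\Lambda_{\Z_2}(y_1, y_2, \ldots)$ via $y_n = 1 + e_n$, noting this ring is non-Noetherian (e.g.\ the augmentation ideal $(y_1, y_2, \ldots)$ is not f.g.). Then I would try a two-track construction with alphabet $A = \Z_2^2$. Consider the $R$-submodule $M \subset R^2$ generated by $\{(y_n, 1) : n \in \N\}$. I first verify $M$ is not f.g.: if generated by finitely many elements involving only $y_1, \ldots, y_N$, the element $(y_{N+1}, 1)$ cannot be reached because the first coordinate of any $R$-combination of the chosen generators lies in the $R$-span of $\{y_1, \ldots, y_N\}$, which does not contain $y_{N+1}$ (even after multiplication by arbitrary $r \in R$, since each monomial in $r y_i$ contains some $y_j$ with $j \leq N$). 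Next I would define $X \subset (\Z_2^G)^2$ as the closure of $M$, which is automatically a $G$-invariant closed $\Z_2$-subspace, hence a vector shift. The key claim is $\Delta_X = M$: the inclusion $M \subseteq \Delta_X$ is by construction, and the reverse should follow because a finite-support configuration approximable by elements of $M$ is already expressible using finitely many $R$-combinations of the generators $(y_n, 1)$, by a compactness-style argument limiting the indices $n$ that can contribute to any finite window.

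For the lamplighter group $G = \Z_2 \wr \Z = H \rtimes \Z$ with $H = \bigoplus_{n \in \Z} \Z_2$, I would use the isomorphism $H \cong \bigoplus_\N \Z_2$ to obtain from the previous step a vector shift $Y \subset A^H$ with $\Delta_Y$ not finitely orbit-generated as a $\Z_2[H]$-module. Define $X \subset A^G$ by: $x \in X$ iff for each $n \in \Z$, the restriction $x|_{t^n H}$ (viewed as a configuration on $H$ via $h \mapsto x(t^n h)$) lies in $Y$. Since $Y$ is $H$-invariant and the $t$-shift cyclically permutes cosets mapping coset-$n$ restrictions to coset-$(n+1)$ restrictions, $X$ is a $G$-invariant vector subshift of $A^G$. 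The homoclinic group then decomposes as $\Delta_X \cong \bigoplus_{n \in \Z} \Delta_Y$ as $\Z_2$-vector spaces, with $H$ acting by twisted shifts (via $\sigma^{-n}$ on the $n$-th summand) and $t$ permuting summands. The main lemma to prove is: if $\Delta_Y$ is not f.g.\ as a $\Z_2[H]$-module, then $\Delta_X$ is not f.g.\ as a $\Z_2[G]$-module. For this, given a hypothetical finite generating set $w_1, \ldots, w_k \in \Delta_X$, each supported on finitely many cosets, I observe that $\Z_2[G] = \bigoplus_m t^m \Z_2[H]$, so that the coset-$0$ component of $\sum_i \Z_2[G] w_i$ equals a sum of finitely many $\sigma^{-m}$-twisted $\Z_2[H]$-translates of the coset restrictions of the $w_i$, yielding a finitely generated $\Z_2[H]$-submodule of $\Delta_Y$ and contradicting the assumption.

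The hard part is clearly the first step: verifying that the vector shift $\overline{M}$ on $\bigoplus_\N \Z_2$ has $\Delta_{\overline M} = M$ rather than some strictly larger closed submodule that happens to be finitely generated. Analogous natural attempts have a strong tendency to collapse ``$\Delta_X$ shrinks to zero'' (if the defining constraints involve too many generators $e_n$, forcing finite-support configurations to be invariant under infinitely many shifts) or ``$\Delta_X$ expands to everything'' (if the constraints are too weak, making $\overline{M}$ the full shift with f.g.\ homoclinic group). The delicate balance may require refining the choice of generators of $M$ — for instance replacing $(y_n, 1)$ with a more carefully engineered family whose $\Z_2$-span is already closed inside $R^2$ — but the underlying mechanism remains the non-Noetherian structure of $\Z_2[\bigoplus_\N \Z_2]$, which guarantees the necessary non-finitely-generated closed submodules exist.
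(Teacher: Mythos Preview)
Your reduction from $\Z_2 \wr \Z$ to $H = \bigoplus_\N \Z_2$ via cosets is fine and matches what the paper does (the paper just says ``since $H$ is a subgroup of $\Z_2 \wr \Z$'' and elsewhere refers to ``acting independently on cosets''). The problem is entirely in the first step.

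Your proposed module $M = \langle (y_n,1) : n \in \N \rangle_R \subset R^2$ with $R = \Z_2[H]$ and $y_n = 1 + e_n$ does \emph{not} work: its closure is the full shift $(\Z_2^2)^H$, so $\Delta_{\overline M} = R^2$ is finitely orbit-generated. Concretely, the second coordinate of $(y_n,1)$ is the fixed point mass $\chi_{\{0\}}$, while the first coordinate is $\chi_{\{0\}} + \chi_{\{e_n\}}$; as $n \to \infty$ the point $e_n$ leaves every finite window, so $(y_n,1) \to (1,1)$ in the product topology. Thus $(1,1) \in \overline M$, and you already checked $(1,1) \notin M$ (since $\sum r_n y_n$ lies in the augmentation ideal, which does not contain $1$). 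But now $\overline M$ is a closed $R$-submodule, so it also contains $(y_n,1) - (1,1) = (e_n,0)$, hence $e_n \cdot (e_n,0) = (1,0)$, hence $(0,1)$, hence all of $R^2$, hence its closure $(\Z_2^2)^H$. This is exactly the ``$\Delta_X$ expands to everything'' failure mode you anticipated; unfortunately it actually occurs here, and ``may require refining the choice of generators'' is not a proof.

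The paper's construction is genuinely different and much more hands-on: it works with alphabet $\Z_2$ (one track), identifies $H$ with $\N$ via binary expansions, and builds explicit hierarchical words $u_0 = 1$, $u_n = 0^{m_{n-1}} 0^{m_{n-1}} u_{n-1} u_{n-1}$ with $m_i = 4^i$, then takes $X$ to be the closure of the span of certain translates of $v_i = u_i^4$. The three properties ($X$ is $H$-invariant, the closure introduces no new homoclinic points, and the homoclinic group is not finitely orbit-generated) are each proved by separate inductive arguments exploiting the self-similar block structure of the $u_n$. The point is that the generators $v_i$ are engineered so that each one is \emph{almost} generated by the previous ones (enough to make the span $H$-invariant and to control the closure), but each genuinely adds one new dimension in its natural window (so the union is not finitely generated). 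This delicate balance is exactly what your module-theoretic approach is missing, and it does not fall out of general non-Noetherianity of $R$; it requires a specific construction.
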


\begin{proof}
It is enough to show that this is true for $H = \oplus_\N \Z_2$ (the direct sum of infinitely many copies of $\Z_2$), since it is a subgroup of $\Z_2 \wr \Z$. The left-regular shift action of $H$ on the infinite direct product $\Z_2^H$ can be interpreted as follows: Identify $H$ with $\N$ by identifying elements of $H$ with binary expansions of numbers, so we can write elements of the direct product $\Z_2^H$ as infinite binary words. Then $H = \langle \{2^i \;|\; i \in \N\} \rangle$ and the group operation is bitwise XOR. The action of a generator $2^k$ on $\Z_2^H$ is to swap the contents of intervals
\[ [2^{k+1} n, 2^{k+1} n + 2^k) \leftrightarrow [2^{k+1} n + 2^k, 2^{k+1}(n+1)) \]
where $k, n \geq 0$. For convenience, we define also a right shift map on $\Z_2^H = \Z_2^\N$ by $s(x)_i = x_{i-1}$ (with $x_{-i} = 0$ for all $i > 0$). Define the subgroups $H_n = [0, 2^n)$ for $n \geq 0$.

Define $m_i = 4^i$ and define a sequence of binary words inductively by $u_0 = 1$ and $u_n = 0^{m_{n-1}}0^{m_{n-1}}u_{n-1}u_{n-1}$ for $n \geq 1$. The word $u_i$ is of length $m_i$ and the word $v_i = u_i^4$ is of length $m_{i+1}$. We identify a finite word $v$ with the infinite word $v0^\N \in \Z_2^H$. Define now $X \subset \Z_2^H$ as the topological closure of the subgroup generated by the configurations $s^{k m_{i+1}}(v_i)$ for $k \geq 0, i \geq 0$.

We now show the following claims:
\begin{itemize}
\item $X$ is a group shift over $H$, i.e. the translation of $H$ is well-defined on it,
\item $X$ has no ``new'' homoclinics, i.e.\ $\Delta_X = \langle \{s^{nm_{i+1}}(v_i) \;|\; i, n \in \N \} \rangle$, and
\item $X$ is not finitely orbit-generated.
\end{itemize}

For the first claim, we show the stronger fact that the subgroup $X_n = \langle \{s^{k m_{i+1}}(v_i) \;|\; i \leq n, k \in \N \} \rangle$ is closed under the action of $H$ for each $n \in \N$. For this we show by induction that every $H$-translate of $v_n$ with support contained in $[0,m_{n+1})$, in other words its every $H_{2 n + 2}$-translate, is generated by $v_n$ together with $H$-translates of $v_i$ for $i < n$. This is clearly true for $v_0 = 1 1 1 1$ which has no such nontrivial translates.

Consider then
\[ v_n = u_n^4 = (0^{m_{n-1}}0^{m_{n-1}}u_{n-1}u_{n-1})^4. \]
Every $H_{2 n + 2}$-translate of $v_n$ is obtained by first permuting the four $u_n$-blocks by the Klein four-group $\langle 2^{2 n}, 2^{2 n + 1} \rangle$ (which fixes $v_n$, thus is useless), and then applying $H_{2 n}$-translations which permute the four $u_n$-blocks separately. On the other hand every $H_{2 n}$-translate of the blocks $0^{m_{n-1}}0^{m_{n-1}}u_{n-1}u_{n-1}$ can be realized by adding vectors from $X_{n-1}$: adding $v_{n-1} = u_{n-1}u_{n-1}u_{n-1}u_{n-1}$ realizes the translation that swaps the largest intervals, and any $H_{2 n - 2}$-translation inside a $u_{n-1}$-block is obtained by adding vectors from $X_{n-2}$, by induction. This concludes the claim that $X$ is a group shift.

Observe that $\dim (X_{n-1}|_{[0,m_{n+1})}) = 4 \cdot \dim (X_{n-1}|_{[0,m_n)})$ since $X_{n-1}$ is generated by translates of vectors with support contained in $[0, m_n)$. Next, we show an auxiliary claim that $\dim (X_n|_{[0,m_{n+1})}) = \dim (X_{n-1}|_{[0,m_{n+1})}) + 1$. The upper bound follows from the fact that $X_n|_{[0,m_{n+1})} = X_{n-1}|_{[0,m_{n+1})} + \langle v_n \rangle$. We prove the lower bound by induction on $n$. For $n = 1$ this is clear since no $H$-translate of $v_0 = 1 1 1 1$ contains a subword $0 0 1 1$ aligned at a multiple of $4$. Consider now $n > 1$, and suppose for a contradiction that $v_n \in X_{n-1}$. Since $v_n$ is composed of four $0^{m_{n-1}}0^{m_{n-1}}u_{n-1}u_{n-1}$-blocks, all elements of $\langle s^{k m_n}(v_{n-1}) \;|\; k \geq 0 \rangle + v_n$ consist of blocks of the form $a a b b$ with $a, b \in \{ 0^{m_{n-1}}, u_{n-1} \}$. Thus $u_{n-1} \in X_{n-2}$, and since $X_{n-2}$ is $H$-invariant, this implies $v_{n-2} \in X_{n-2}$, a contradiction. This concludes the claim.

In particular, $X_{\infty} = \bigcup_n X_n$ is not finitely-generated. If the closure $X = \overline{X_{\infty}}$ does not contain any new homoclinic points, i.e. $\Delta_X = X_{\infty}$, then the last two items follow. For this, we show that if $x \in X_{\infty}$ and $x|_{[0,m_{i+1})} \notin X_i$ then $x_{[m_{i+1},m_{i+2})}$ is nonzero. The result $\Delta_X = X_{\infty}$ follows from this, since if $x \in X \setminus X_i$ and $\supp(x) \subset [0,m_{i+1})$, then we can find an approximation $y \in X_{\infty}$ such that $y|_{[0,m_{i+2})} = x|_{[0,m_{i+2})}$, and since $x_{[m_{i+1},m_{i+2})} = 0^{m_{i+2}-m_{i+1}}$ we necessarily have $y \in X_i$, so also $x \in X_i \subset X_{\infty}$.

To show the claim, suppose $x \in X_{\infty}$ and $x|_{[0,m_{i+1})} \notin X_i$. Observe that $X_{i+n}|_{[0,m_{i+1})} = X_{i+1}|_{[0,m_{i+1})}$ for all $n \geq 1$, and if $x|_{[0,m_{i+1})} \in X_{i+1} \setminus X_i$ then we have used a nontrivial shift of $v_{i+1}$ to produce $x$, and as seen above we cannot cancel any of the four $m_{i+1}$-blocks $u_{i+1}$ with vectors from $X_i$, so $x_{[m_{i+1},m_{i+2})}$ is nonzero as claimed.
\end{proof}

Note that the example above only shows that the proof of Theorem~\ref{thm:MainTechnical} does not extend to $\Z_2 \wr \Z$. We do not know whether all Lie algebraic subshifts on $\Z_2 \wr \Z$ which have trivial co-homoclinic factor are cellwiseable.

One may wonder if for Proposition~\ref{prop:NotFOG} it is sufficient that that $\oplus_\N \Z_2$ is infinitely-generated, i.e.\ $\Z_2 \wr \Z$ does not have the maximal condition on subgroups. In \cite{Sa18} it was shown that all such groups admit non-SFT group shifts. The above construction does not seem to directly adapt to all such groups. As a concrete example, we do not know whether $\Z[1/2] \rtimes \Z$ admits group shifts $X$ such that $\Delta_X$ is not finitely orbit-generated. Here, $\Z[1/2]$ is the dyadic rationals under addition and $\Z$ acts by multiplication by $2$.

\bibliographystyle{plain}
\bibliography{../../../bib/bib}{}

\def\ocirc#1{\ifmmode\setbox0=\hbox{$#1$}\dimen0=\ht0 \advance\dimen0
  by1pt\rlap{\hbox to\wd0{\hss\raise\dimen0
  \hbox{\hskip.2em$\scriptscriptstyle\circ$}\hss}}#1\else {\accent"17 #1}\fi}
\begin{thebibliography}{10}

\bibitem{BaGoMaSi18}
Sebasti{\'a}n {Barbieri}, Ricardo {G{\'o}mez-A{\'\i}za}, Brian {Marcus}, and
  Siamak {Taati}.
\newblock {Equivalence of relative Gibbs and relative equilibrium measures for
  actions of countable amenable groups}.
\newblock {\em arXiv e-prints}, page arXiv:1809.00078, Aug 2018.

\bibitem{BlLa93}
F.~Blanchard and Y.~Lacroix.
\newblock Zero entropy factors of topological flows.
\newblock {\em Proc. Amer. Math. Soc.}, 119(3):985--992, 1993.

\bibitem{BoSc08}
Mike Boyle and Michael Schraudner.
\newblock {$\mathbb{Z}^d$} group shifts and {B}ernoulli factors.
\newblock {\em Ergodic Theory Dynam. Systems}, 28(2):367--387, 2008.

\bibitem{BuSa81}
Stanley Burris and H.~P. Sankappanavar.
\newblock {\em A course in universal algebra}, volume~78 of {\em Graduate Texts
  in Mathematics}.
\newblock Springer-Verlag, New York, 1981.

\bibitem{Ha54}
Philip Hall.
\newblock Finiteness conditions for soluble groups.
\newblock {\em Proceedings of the London Mathematical Society}, 3(1):419--436,
  1954.

\bibitem{KeLi16}
David Kerr and Hanfeng Li.
\newblock {\em Ergodic theory}.
\newblock Springer Monographs in Mathematics. Springer, Cham, 2016.
\newblock Independence and dichotomies.

\bibitem{Ki87}
Bruce~P. Kitchens.
\newblock {Expansive dynamics on zero-dimensional groups.}
\newblock {\em Ergodic Theory Dyn. Syst.}, 7:249--261, 1987.

\bibitem{Ma71}
Saunders MacLane.
\newblock {\em Categories for the working mathematician}.
\newblock Springer-Verlag, New York, 1971.
\newblock Graduate Texts in Mathematics, Vol. 5.

\bibitem{Ba14}
Bartosz Malman.
\newblock Zero-divisors and idempotents in group rings.
\newblock Master's thesis, {Lund University}, 2014.

\bibitem{Bo15}
Nicol\'{a}s Matte~Bon.
\newblock Topological full groups of minimal subshifts with subgroups of
  intermediate growth.
\newblock {\em J. Mod. Dyn.}, 9:67--80, 2015.

\bibitem{Me19}
Tom Meyerovitch.
\newblock Pseudo-orbit tracing and algebraic actions of countable amenable
  groups.
\newblock {\em Ergodic Theory and Dynamical Systems}, 39(9):2570–2591, 2019.

\bibitem{Sa14}
Ville Salo.
\newblock {\em Subshifts with Simple Cellular Automata}.
\newblock PhD thesis, University of Turku, 2014.

\bibitem{Sa18}
Ville {Salo}.
\newblock {When are group shifts of finite type?}
\newblock {\em arXiv e-prints}, page arXiv:1807.01951, Jul 2018.

\bibitem{SaTo12d}
Ville Salo and Ilkka T{\"o}rm{\"a}.
\newblock On shift spaces with algebraic structure.
\newblock In S.~Barry Cooper, Anuj Dawar, and Benedikt L{\"o}we, editors, {\em
  How the World Computes}, pages 636--645, Berlin, Heidelberg, 2012. Springer
  Berlin Heidelberg.

\bibitem{SaTo15}
Ville Salo and Ilkka T{\"o}rm{\"a}.
\newblock Category theory of symbolic dynamics.
\newblock {\em Theoretical Computer Science}, 567:21 -- 45, 2015.

\bibitem{Sc95}
Klaus Schmidt.
\newblock {\em Dynamical systems of algebraic origin}, volume 128 of {\em
  Progress in Mathematics}.
\newblock Birkh\"auser Verlag, Basel, 1995.

\end{thebibliography}

\end{document}